\newcommand{\CC}{\ensuremath{{\mathcal{A}}}}
\newcommand{\CCp}[1]{\ensuremath{{\mathcal{A}^{{#1}}}}}
\newcommand{\DC}{\ensuremath{{\mathcal{A}_d}}}
\newcommand{\DCp}[1]{\ensuremath{{\mathcal{A}_d^{#1}}}}
\newcommand{\DCl}[1]{\ensuremath{{\mathcal{A}_{d,{#1}}}}}
\newcommand{\DClp}[2]{\ensuremath{{\mathcal{A}_{d,{#1}}^{{#2}}}}}
\newcommand{\LDCp}[1]{\ensuremath{{a_d^{#1}}}}
\newcommand{\LDClp}[2]{\ensuremath{{a_{d,{#1}}^{#2}}}}
\newcommand{\BC}{\ensuremath{{\mathcal{B}}}}
\newcommand{\BCp}[1]{\ensuremath{{\mathcal{B}^{{#1}}}}}
\newcommand{\BD}{\ensuremath{{\mathcal{B}_d}}}
\newcommand{\BDp}[1]{\ensuremath{{\mathcal{B}_d^{#1}}}}
\newcommand{\BDl}[1]{\ensuremath{{\mathcal{B}_{d,{#1}}}}}
\newcommand{\BDlp}[2]{\ensuremath{{\mathcal{B}_{d,{#1}}^{#2}}}}
\newcommand{\LBDp}[1]{\ensuremath{{b_d^{#1}}}}
\newcommand{\LBDlp}[2]{\ensuremath{{b_{d,{#1}}^{#2}}}}
\newcommand{\Holp}[1]{\ensuremath{\Hol_{{#1}}}}
\newcommand{\HLds}[1]{\ensuremath{\overline{h_d^{{#1}}}}}
\newcommand{\HLd}[1]{\ensuremath{{h_d^{{#1}}}}}
\newcommand{\HLdl}[2]{\ensuremath{{h_{d,{#2}}^{{#1}}}}}
\newcommand{\Hold}{\ensuremath{\Holp{\DC}}}
\DeclareMathOperator{\HPc}{\Phi_\CC}
\DeclareMathOperator{\HPd}{\Phi_{\mathit{d}}}
\DeclareMathOperator{\PTc}{PT_\CC}
\DeclareMathOperator{\PTd}{PT_{\mathit{d}}}
\DeclareMathOperator{\Hol}{Hol}
\DeclareMathOperator{\Log}{Log}
\DeclareMathOperator{\Arg}{Arg}
\DeclareMathOperator{\jmod}{mod}
\newcommand{\SG}{\ensuremath{G}\xspace}
\newcommand{\jgg}{\ensuremath{\mathfrak{g}}\xspace}
\newcommand{\UDt}{\ensuremath{\mathcal{U}}\xspace}
\newcommand{\VDt}{\ensuremath{\mathcal{V}}\xspace}
\newcommand{\WDt}{\ensuremath{\mathcal{W}}\xspace}
\newcommand{\baseRing}[1]{\ensuremath{\mathbb{#1}}}
\newcommand{\N}{\baseRing{N}}
\newcommand{\R}{\baseRing{R}}
\newcommand{\C}{\baseRing{C}}
\newcommand{\NZ}{\ensuremath{\N\cup\{0\}}}
\newcommand{\VD}{\ensuremath{\mathcal{V}_d}\xspace}
\newcommand{\ie}{\textsl{i.e.}\xspace}
\newcommand{\del}{\ensuremath{\partial}}
\newcommand{\ti}[1]{\widetilde{#1}}
\newcommand{\stext}[1]{\ensuremath{\quad\text{#1}\quad}}
\newcommand{\jdef}[1]{\index{#1}\emph{#1}}
\newcommand{\conj}{\overline}
\newcommand{\lie}[1]{\ensuremath{Lie(#1)}}
\providecommand{\abs}[1]{\ensuremath{\left\lvert{#1}\right\rvert}}
\newcommand{\SM}{\ensuremath{\smallsetminus}}
\theoremstyle{plain}
\newtheorem{theorem}{Theorem}[section]
\newtheorem{corollary}[theorem]{Corollary}
\newtheorem{prop}[theorem]{Proposition}
\newtheorem{lemma}[theorem]{Lemma}
\theoremstyle{definition}
\newtheorem{definition}[theorem]{Definition}
\newtheorem{remark}[theorem]{Remark}
\newtheorem{example}[theorem]{Example}
\numberwithin{equation}{section}
\newcommand{\FP}[2]{\tensor[_{{#1}}]{\times}{_{{#2}}}} %requires tensor package
\newcommand{\PBmap}{\ensuremath{\pi}\xspace}
\newcommand{\PBtotal}{\ensuremath{Q}\xspace}
\newcommand{\PBbase}{\ensuremath{M}\xspace}
\begin{document}

% Top Matter

\title[Discrete connections: abelian case]{Discrete connections on
  principal bundles:\\ abelian group case}

% First
\author{Javier Fern\'andez}
\address{Instituto Balseiro\\Universidad Nacional de Cuyo --
  C.N.E.A.\\Bariloche\\R{\'\i}o Negro\\Rep\'ublica Argentina}
\email{jfernand@ib.edu.ar}

% Second author
\author{Mariana Juchani} \address{Departamento de Matem\'atica,
  Facultad de Ciencias Exactas, Universidad Nacional de La Plata\\50 y
  115, La Plata, Buenos Aires, 1900, Rep\'ublica Argentina\\Centro de
  Matem\'atica de La Plata (CMaLP)}
\email{marianaevaj@gmail.com}

% Third author
\author{Marcela Zuccalli}
\address{Departamento de Matem\'atica,
  Facultad de Ciencias Exactas, Universidad Nacional de La Plata\\50 y
  115, La Plata, Buenos Aires, 1900, Rep\'ublica Argentina\\Centro de
  Matem\'atica de La Plata (CMaLP)}
\email{marce@mate.unlp.edu.ar}

\thanks{This research was partially supported by grants from the
  Universidad Nacional de Cuyo (grant 06/C567), Universidad Nacional
  de La Plata, and CONICET}

\subjclass[2020]{Primary: 53B15; Secondary: 70H33, 81Q70}

%\date{\today}

% End of Top Matter

\begin{abstract}
  In this note we consider a few interesting properties of discrete
  connections on principal bundles when the structure group of the
  bundle is an abelian Lie group. In particular, we show that the
  discrete connection form and its curvature can be interpreted as
  singular $1$ and $2$ cochains respectively, with the curvature being
  the coboundary of the connection form. Using this formalism we prove
  a discrete analogue of a formula for the holonomy around a loop
  given by Marsden, Montgomery and Ratiu for (continuous) connections
  in a similar setting.
\end{abstract}

\bibliographystyle{amsalpha}

\maketitle

%%%%%%%%%%%%%%%%%%%%%%%%%%%%%%%%%%%%%%%%%%%%%%%%%%%%%%%%%

\section{Introduction}
\label{sec:introduction}

Principal bundles are used to state many important questions in
Geometry and in Physics. One versatile tool in the study of those
bundles, and in finding answers to the questions, are the principal
connections and the ``connection package'': curvature, parallel
transport and holonomy. Curvature and holonomy are closely related
notions, as shown, for example, by the Ambrose--Singer Theorem
(Thm. 8.1 in~\cite{bo:kobayashi_nomizu-foundations-v1}). When the
structure group $G$ of the (left) principal bundle
$\PBmap:\PBtotal\rightarrow \PBbase$ is abelian and $\CC$ is a
principal connection on $\PBmap$, a well known formula (see, for
instance, p. 41
of~\cite{ar:marsden_montgomery_ratiu-reduction_symmetry_and_phases_in_mechanics})
gives a direct expression for the holonomy of $\CC$ around a loop
$\rho$ in $\PBbase$ in terms of an integral of $\CC$ over $\rho$ or,
in case $\rho$ is the boundary of a surface $\sigma$, the integral
over $\sigma$ of the curvature of $\CC$. This kind of formula is
very useful for many applications: for instance, it allows the
control of the ``displacement'' in a fiber by choosing an adequate
loop $\rho$ and parallel-transporting along $\rho$. It is also useful
in the reconstruction of the dynamics of some systems on $\PBtotal$
that have $G$ as a symmetry group and whose (reduced) dynamics is
known in $\PBbase:=\PBtotal/G$ (see \S5 of
~\cite{ar:marsden_montgomery_ratiu-reduction_symmetry_and_phases_in_mechanics}).
In this setting the holonomy is also known as the \jdef{geometric
  phase} (see, for
instance,~\cite{ar:simon-holonomy_the_quantum_adiabatic_theorem_and_berrys_phase}).

More precisely, if $V\subset \PBbase$ is an open subset and
$s:V\rightarrow \PBtotal$ is a smooth local section of $\PBmap$, let
$\rho:[0,1]\rightarrow \PBbase$ be a continuous loop contained in $V$
and $\conj{m}:=\rho(0)$; pick $\conj{q}\in \PBtotal|_{\conj{m}}$, the
fiber of $\PBtotal$ over $\conj{m}$, and define
$\Phi_\CC(\rho,\conj{q}) \in G$ by
\begin{equation}\label{eq:phase_in_terms_of_connection-cont}
  \HPc(\rho,\conj{q}) := \exp_G\left(-\int_{\rho} \CCp{s}\right),
\end{equation}
where $\CCp{s}:=s^*(\CC)$ is the local expression of $\CC$ in the
trivialization induced by $s$ (hence a $1$-form on $V$ with values in
$\jgg:=\lie{G}$) and $\exp_G:\jgg\rightarrow G$ is the Lie-theoretic
exponential map. If, in addition, $\rho$ is the boundary of a surface
$\sigma$ contained in $V$, by Stoke's Theorem, we have
\begin{equation}\label{eq:phase_in_terms_of_curvature-cont}
  \HPc(\rho,\conj{q}) = \exp_G\left(-\int_{\sigma} \BCp{s}\right),
\end{equation}
for $\BCp{s} := s^*(\BC) = d \CCp{s}$, where $\BC$ is the curvature
$2$-form of $\CC$. These formulas are relevant because
$\HPc(\rho,\conj{q})$ is the phase gained by $\conj{q}$ on parallel
transport around the loop $\rho$. In other words, if
$\PTc(\rho):\PBtotal|_{\conj{m}}\rightarrow \PBtotal|_{\conj{m}}$ is
the parallel transport over $\rho$ operator (with respect to $\CC$),
then
\begin{equation*}
  \PTc(\rho)(\conj{q}) = l^\PBtotal_{\HPc(\rho,\conj{q})}(\conj{q}),
\end{equation*}
where $l^\PBtotal_g$ is the left $G$-action on $\PBtotal$ defined by
the principal $G$-bundle structure.

As we mentioned above, connections are useful in the study of certain
symmetric dynamical systems, for example, the mechanical systems as
considered in~\cite{bo:AM-mechanics}. In many instances, it is
essential to consider numerical approximations to those systems, which
can be thought of as discrete-time dynamical systems
(see~\cite{ar:marsden_west-discrete_mechanics_and_variational_integrators}). In
this context it is natural to replace the tangent bundle $T\PBtotal$
with a ``discrete version'' $\PBtotal\times \PBtotal$\footnote{The
  basic motivation is that if time is discrete, a velocity vector
  ---that is, an element of $T\PBtotal$--- may be replaced by two
  nearby points ---that is, an element of $\PBtotal\times \PBtotal$,
  near the diagonal $\Delta_\PBtotal$.}. If, in addition, there is a
symmetry group $G$ acting on $\PBtotal$ in such a way that the
quotient map $\PBmap:\PBtotal\rightarrow \PBtotal/G$ is a principal
$G$-bundle, in the continuous case, a principal connection $\CC$ on
$\PBmap$ can be used to decompose all elements of $T\PBtotal$ into
vertical an horizontal parts. Discrete connections were introduced by
M. Leok, J. Marsden and A. Weinstein
in~\cite{ar:leok_marsden_weinstein-a_discrete_theory_of_connections_on_principal_bundles}
and~\cite{th:leok-thesis} and, later, refined by some of us
in~\cite{ar:fernandez_zuccalli-a_geometric_approach_to_discrete_connections_on_principal_bundles}
in order to have a geometric way of splitting elements of
$\PBtotal\times \PBtotal$ into ``vertical'' and ``horizontal'' parts
in a way that was suitable to perform the symmetry reduction for the
discrete time mechanical systems. With this idea in mind, the goal of
this paper is to prove formulas analogous
to~\eqref{eq:phase_in_terms_of_connection-cont}
and~\eqref{eq:phase_in_terms_of_curvature-cont} for discrete
connections on a principal $G$-bundle, when $G$ is abelian. As for
(continuous) connections, a natural application of such formulas is to
control the dynamics of symmetric dynamical systems.

Even though there are many conceptual parallels between connections
and discrete connections, the fact that (for abelian $G$) the
curvature ($2$-form) $\BC$ is related to the connection $1$-form $\CC$
by $\BC=d\CC$ has no equivalent in the discrete world: both the
discrete connection form $\DC$ and its curvature $\BD$ are $G$-valued
functions. In order to reach our stated goal we introduce a formalism
that allows us to view $\DC$ and $\BD$ as singular cochains $[\DC]$
and $[\BD]$ such that $[\BD]=\delta [\DC]$. It is within this
framework that we obtain the discrete holonomy phase
formulas~\eqref{eq:phases_and_loops-group-DC}
and~\eqref{eq:phases_and_loops-group-BD}. We mention that, in this
setting, what we call integration is the natural pairing of singular
cochains and chains; this idea is in line with the fact that the
integral of differential forms over manifolds is a realization of the
pairing between cochains of differential forms ---the elements of the
de Rham complex--- and singular chains on a manifold. There is still a
twist in that~\eqref{eq:phase_in_terms_of_connection-cont}
and~\eqref{eq:phase_in_terms_of_curvature-cont} involve integration in
$\jgg$ while~\eqref{eq:phases_and_loops-group-DC}
and~\eqref{eq:phases_and_loops-group-BD} use integration of $G$-valued
cochains. Thus, we reformulate our $G$-valued singular cochains as
$\jgg$-valued ones and are able to
obtain~\eqref{eq:phases_and_loops-algebra-DC}
and~\eqref{eq:phases_and_loops-algebra-BD}, the exact analogue of the
continuous phase formulas.

The plan for the paper is as follows. In
Section~\ref{sec:discrete_connections_on_principal_bundles} we recall
the relevant notions of discrete connection on a principal $G$-bundle,
its curvature, the corresponding parallel transport and holonomy. In
Section~\ref{sec:singular_cohomology_and_adaptations} we review some
basic ideas of singular chains and cochains which we then refine to
what we call the \jdef{small complexes} that are needed in order to
view the discrete connection form $\DC$ and the discrete curvature
$\BD$ as singular $1$ and $2$ cochains $[\DC]$ and $[\BD]$
respectively. In Section~\ref{sec:holonomy_around_a_loop} we obtain
formulas to compute the discrete holonomy phase that, eventually, lead
to the first version of our result expressing those phases in terms of
integrals of the $G$-valued $[\DC]$ and $[\BD]$
(Theorem~\ref{th:phases_and_loops-group}). Last, in
Section~\ref{sec:forms_with_values_in_the_lie_algebra} we consider
$\jgg$-valued singular cochains and construct logarithmic versions of
$[\DC]$ and $[\BD]$, which appear in the expression of the discrete
holonomy phase as an integral of $\jgg$-valued cochains
(Theorem~\ref{th:phases_and_loops-algebra}).

\emph{Notation:} throughout this paper
$\PBmap:\PBtotal\rightarrow \PBbase$ is a smooth principal (left)
$G$-bundle ---usually referred to as $\pi$ in what follows--- and we
denote the (left) $G$-action on $\PBtotal$ by $l^\PBtotal_g(q)$ for
$q\in \PBtotal$ and $g\in G$. In addition to $l^\PBtotal$ we will
consider several other (left) $G$-actions: the lifted action on
$T\PBtotal$, the diagonal action on $\PBtotal\times \PBtotal$ and the
action on the second variable of $\PBtotal\times
\PBtotal$. Explicitly, for $g\in G$, $v_q\in T_q\PBtotal$ and
$(q_0,q_1)\in \PBtotal\times \PBtotal$,
\begin{gather*}
  l^{T\PBtotal}_g(v_q) := T_ql^\PBtotal_g(v_q),\quad l^{\PBtotal\times
    \PBtotal}_g(q_0,q_1):=(l^\PBtotal_g(q_0),l^\PBtotal_g(q_1)),\\
  l^{\PBtotal\times \PBbase}_g(q,m) :=(l^\PBtotal_g(q),m), \stext{ and
  } l^{\PBtotal\times \PBtotal_2}_g(q_0,q_1):=(q_0,l^\PBtotal_g(q_1)).
\end{gather*}
We denote the \jdef{diagonal} of any Cartesian product $X\times X$ by
$\Delta_X$ and the \jdef{discrete vertical submanifold} of $\PBtotal$
by
$\VD:=(\PBmap\times \PBmap)^{-1}(\Delta_\PBbase)\subset \PBtotal\times
\PBtotal$.

%%%%%%%%%%%%%%%%%%%%%%%%%%%%%%%%%%%%%%%%%%%%%%%

\section{Discrete connections on principal bundles}
\label{sec:discrete_connections_on_principal_bundles}

In this section we review the notion of discrete connection on a
principal bundle (via discrete connection form and discrete horizontal
lift), the associated curvature and the corresponding parallel
transport.

%%%%%%%%%%%%%

\subsection{Discrete connection form and discrete horizontal lift}
\label{sec:discrete_connection_form_and_discrete_horizontal_lift}

Discrete connections on a principal bundle can be constructed using
different data. For this paper it will be sufficient to characterize
discrete connections via their \jdef{discrete connection form} and
their \jdef{discrete horizontal lift}. For more information on
discrete connections
see~\cite{ar:fernandez_zuccalli-a_geometric_approach_to_discrete_connections_on_principal_bundles}.

\begin{definition}\label{def:D_type_subset}
  An open subset $\UDt\subset \PBtotal\times \PBtotal$ is said to be
  of \jdef{$D$-type} if it is $G\times G$-invariant for the product of
  the $G$-action with itself and $\VD\subset \UDt$ (in particular,
  $\Delta_\PBtotal\subset \UDt$).
\end{definition}

Given a $D$-type subset $\UDt\subset \PBtotal\times \PBtotal$ we define
\begin{equation}
  \label{eq:prime_and_2prime_subsets-def}
  \UDt':=(id_\PBtotal\times \PBmap)(\UDt)\subset \PBtotal\times \PBbase \stext{ and }
  \UDt'':=(\PBmap\times \PBmap)(\UDt)\subset \PBbase\times \PBbase.
\end{equation}
As $\PBmap$ is a principal bundle map, both $\UDt'$ and $\UDt''$ are
open subsets.

\begin{definition}\label{def:discrete_connection_form}
  Let $\UDt\subset \PBtotal\times \PBtotal$ be a $D$-type subset. A
  smooth function $\DC:\UDt\rightarrow \SG$ is called a \jdef{discrete
    connection form} on $\PBmap$ if, for all $q\in \PBtotal$,
  $\DC(q,q)=e$, the identity element of $G$, and it satisfies
  \begin{equation*}
    \DC(l^\PBtotal_{g_0}(q_0),l^\PBtotal_{g_1}(q_1)) = g_1 \DC(q_0,q_1) g_0^{-1}
    \stext{ for all } (q_0,q_1)\in \UDt,\quad g_0,g_1\in G.
  \end{equation*}
\end{definition}

\begin{remark}
  An important difference between continuous and discrete connections
  on a principal $G$-bundle $\PBmap: \PBtotal\rightarrow \PBbase$ is
  that while the (continuous) connection $1$-form is defined over all
  of $T\PBtotal$, a discrete connection over $\PBmap$ is only defined
  in some open neighborhood of the diagonal
  $\Delta_\PBtotal\subset \PBtotal\times \PBtotal$, unless $\PBmap$ is
  a trivial bundle. Indeed, if a discrete connection is globally
  defined, a global section of $\PBmap$ can be readily
  constructed. For this reason, when dealing with discrete
  connections, their domain is very important.
\end{remark}

\begin{remark}
  Just as connections on principal bundles can be defined by a
  horizontal distribution, discrete principal connections can be
  defined by a \jdef{horizontal submanifold}
  $Hor_\DC\subset \PBtotal\times \PBtotal$. Indeed, this is the
  approach
  of~\cite{ar:leok_marsden_weinstein-a_discrete_theory_of_connections_on_principal_bundles}
  and~\cite{ar:fernandez_zuccalli-a_geometric_approach_to_discrete_connections_on_principal_bundles}. But,
  as shown in Theorem 3.6
  of~\cite{ar:fernandez_zuccalli-a_geometric_approach_to_discrete_connections_on_principal_bundles},
  giving a horizontal submanifold is equivalent to giving a discrete
  connection form $\DC$. The horizontal submanifold associated to
  $\DC$ is $Hor_\DC:=\DC^{-1}(\{e\})$.
\end{remark}

\begin{example}\label{ex:DC_mu-DC_form}
  Let $\R_{>0} := (0,+\infty)\subset\R$ and
  $U(1):=\{z\in\C:\abs{z}=1\}$ seen as a multiplicative group under
  complex multiplication. Then, $\PBmap:\PBtotal\rightarrow \PBbase$
  given by $p_1:\R_{>0}\times U(1)\rightarrow \R_{>0}$ is a smooth
  (trivial) principal $G$-bundle, with $G:= U(1)$ acting on the second
  factor by multiplication. Although it is not relevant for this work,
  the manifold $\PBtotal$ can be seen, for instance, as the
  configuration manifold (for the center of mass description) of a
  planar mechanical system consisting of two equal-mass particles. For
  $\mu\in\N$, we define
  \begin{equation*}
    \DCl{\mu}:\PBtotal\times \PBtotal\rightarrow U(1) \stext{ by }
    \DCl{\mu}((r_0,h_0),(r_1,h_1)) := \exp(i(r_1-r_0)^\mu) \frac{h_1}{h_0}.
  \end{equation*}
  It is easy to check that $\DCl{\mu}$ is a discrete connection form
  on $\PBmap$ that is globally defined, \ie, with domain
  $\UDt:=\PBtotal\times \PBtotal$.
\end{example}

\begin{definition}\label{def:HLd}
  Let $\UDt\subset \PBtotal\times \PBtotal$ be of $D$-type. A smooth
  function $\HLd{}:\UDt'\rightarrow \PBtotal\times \PBtotal$ is a
  \jdef{discrete horizontal lift} on $\PBmap$ if the following
  conditions hold.
  \begin{enumerate}
  \item \label{it:HL_properties-HL_is_smooth}
    $\HLd{}:\UDt'\rightarrow \PBtotal\times \PBtotal$ is $G$-equivariant for the
    $G$-actions $l^{\PBtotal\times \PBbase}$ and $l^{\PBtotal\times \PBtotal}$.
  \item \label{it:HL_properties-HL_is_a_section} $\HLd{}$ is a section
    of
    $(id_\PBtotal\times \PBmap):\PBtotal\times \PBtotal\rightarrow
    \PBtotal\times \PBbase$ over $\UDt'$, that is,
    $(id_\PBtotal\times \PBmap) \circ \HLd{} = id_{\UDt'}$.
  \item \label{it:HL_properties-HL_normalization} For every $q\in \PBtotal$,
    $\HLd{}(q,\PBmap(q)) = (q,q)$.
  \end{enumerate}
\end{definition}

If $\HLd{}:\UDt'\rightarrow \PBtotal\times \PBtotal$ is a discrete
horizontal lift on $\PBmap$, we define $\HLds{}:=p_2\circ \HLd{}$,
where $p_2:\PBtotal\times \PBtotal\rightarrow \PBtotal$ is the
projection onto the second factor.

Discrete connection forms and discrete horizontal lifts are related as
follows. First, recall the following construction: consider the fiber
product $\PBtotal\FP{\PBmap}{\PBmap}\PBtotal$ of $\PBmap$ with itself
---that is, the set of pairs $(q_0,q_1)$ such that
$\PBmap(q_0) = \PBmap(q_1)$. Let
$\kappa:\PBtotal\FP{\PBmap}{\PBmap}\PBtotal\rightarrow \SG$ be defined
by $\kappa(q_0,q_1):=g$ if and only if $l^\PBtotal_g(q_0)=q_1$. It is
easy to check that $\kappa$ is a smooth function. Let
$\UDt\subset \PBtotal\times \PBtotal$ be a $D$-type subset. Given a
discrete connection form $\DC:\UDt\rightarrow G$, we define
\begin{equation}\label{eq:HLd_from_DC}
  h_\DC:\UDt'\rightarrow \PBtotal\times \PBtotal \stext{ by }
  h_\DC(q,r) := (q,l^\PBtotal_{\DC(q,q')^{-1}}(q')),
\end{equation}
for any $q'\in \PBtotal|_r$. Conversely, given a discrete horizontal
lift $\HLd{}:\UDt'\rightarrow \PBtotal\times \PBtotal$, we define
\begin{equation}\label{eq:DC_from_HLd}
  \DCp{\HLd{}}:\UDt\rightarrow G \stext{ by }
  \DCp{\HLd{}}(q_0,q_1):=\kappa(\HLds{}(q_0,\PBmap(q_1)),q_1).
\end{equation}

\begin{theorem}\label{th:DC_forms_and_HLd_are_equivalent}
  The maps $h_\DC$ and $\DCp{\HLd{}}$ defined
  by~\eqref{eq:HLd_from_DC} and~\eqref{eq:DC_from_HLd} are a discrete
  horizontal lift and a discrete connection form on $\PBmap$
  respectively. Furthermore, the two constructions are inverses of
  each other.
\end{theorem}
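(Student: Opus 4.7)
The plan is a sequence of routine verifications that split naturally into three blocks: showing $h_\DC$ is a discrete horizontal lift, showing $\DCp{\HLd{}}$ is a discrete connection form, and then checking the two constructions are mutually inverse. Throughout, the main tools are the freeness of the $G$-action (which makes $\kappa$ well-defined and smooth on the fiber product $\PBtotal\FP{\PBmap}{\PBmap}\PBtotal$) and the equivariance axioms of $\DC$ and $\HLd{}$.

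First I would check that $h_\DC$ is well-defined, i.e.\ independent of the choice of $q'\in \PBtotal|_r$. If $q''=l^\PBtotal_g(q')$, the equivariance of $\DC$ on the second slot gives $\DC(q,q'')=g\,\DC(q,q')$, hence $l^\PBtotal_{\DC(q,q'')^{-1}}(q'')=l^\PBtotal_{\DC(q,q')^{-1}g^{-1}g}(q')=l^\PBtotal_{\DC(q,q')^{-1}}(q')$. Smoothness of $h_\DC$ follows by picking a local smooth section of $\PBmap$ near $r$ to choose $q'$. Then I would verify the three axioms of Definition~\ref{def:HLd}: the section property is immediate since the $G$-action preserves fibers; the normalization follows from $\DC(q,q)=e$; and $G$-equivariance follows from the equivariance of $\DC$ on the first slot, $\DC(l^\PBtotal_{g_0}(q_0),q')=\DC(q_0,q')g_0^{-1}$, which turns $l^\PBtotal_{\DC(l^\PBtotal_{g_0}(q_0),q')^{-1}}(q')$ into $l^\PBtotal_{g_0}(l^\PBtotal_{\DC(q_0,q')^{-1}}(q'))$, matching the diagonal action on the second coordinate.

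Next I would handle $\DCp{\HLd{}}$. The map is well-defined because $\HLd{}$ being a section of $id_\PBtotal\times\PBmap$ forces $\HLds{}(q_0,\PBmap(q_1))$ to lie in $\PBtotal|_{\PBmap(q_1)}$, which is the same fiber as $q_1$, so $\kappa$ applies; smoothness is inherited from $\HLd{}$ and $\kappa$. The normalization $\DCp{\HLd{}}(q,q)=e$ is exactly property~(\ref{it:HL_properties-HL_normalization}) of $\HLd{}$. For the equivariance, I would compute in two stages: using $\PBmap(l^\PBtotal_{g_1}(q_1))=\PBmap(q_1)$ and the equivariance of $\HLd{}$ in its first argument, one rewrites $\HLds{}(l^\PBtotal_{g_0}(q_0),\PBmap(l^\PBtotal_{g_1}(q_1)))=l^\PBtotal_{g_0}(\HLds{}(q_0,\PBmap(q_1)))$; the definition of $\kappa$ together with freeness of the action then yields $\DCp{\HLd{}}(l^\PBtotal_{g_0}(q_0),l^\PBtotal_{g_1}(q_1))=g_1\,\DCp{\HLd{}}(q_0,q_1)\,g_0^{-1}$.

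For the mutual-inverse statement I would evaluate both compositions directly. Starting from $\HLd{}$, evaluate $\DCp{\HLd{}}$ at $(q,\HLds{}(q,r))$ (choosing $q'=\HLds{}(q,r)$ in the definition of $h_{\DCp{\HLd{}}}$): the normalization of $\HLd{}$ and $\kappa(x,x)=e$ give $\DCp{\HLd{}}(q,\HLds{}(q,r))=e$, so $h_{\DCp{\HLd{}}}=\HLd{}$. Starting from $\DC$, choose $q'=q_1$ in~\eqref{eq:HLd_from_DC}; then $\kappa(l^\PBtotal_{\DC(q_0,q_1)^{-1}}(q_1),q_1)=\DC(q_0,q_1)$ by the defining property of $\kappa$. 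The only step that requires real care is the equivariance of $\DCp{\HLd{}}$, because one must track both actions $l^{\PBtotal\times \PBbase}$ and $l^{\PBtotal\times \PBtotal}$ in $\HLd{}$ and recover the correct conjugation; once this bookkeeping is done, everything else is a direct application of the axioms.
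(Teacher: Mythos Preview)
Your verification is correct and complete: the well-definedness and smoothness of $h_\DC$, the three axioms of Definition~\ref{def:HLd}, the well-definedness and equivariance of $\DCp{\HLd{}}$, and the two inverse identities are all checked properly. The only small omission is that you do not explicitly note that $h_\DC$ lands in $\UDt$ (needed for the codomain in Definition~\ref{def:HLd}), but this is immediate from the $G\times G$-invariance of $\UDt$.

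The paper, however, does not argue any of this directly: its entire proof is the sentence ``It follows from Theorems~3.6 and~4.6 of~\cite{ar:fernandez_zuccalli-a_geometric_approach_to_discrete_connections_on_principal_bundles}.'' So your approach is genuinely different in that it is self-contained, carrying out the elementary verifications in full rather than deferring to the cited source. What you gain is independence from the external reference and transparency about which axioms are used where; what the paper's approach buys is brevity and a pointer to the original framework where these equivalences were first established (and where the intermediate notion of horizontal submanifold, absent from your argument, also enters).
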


\begin{proof}
  It follows from Theorems 3.6 and 4.6
  of~\cite{ar:fernandez_zuccalli-a_geometric_approach_to_discrete_connections_on_principal_bundles}.
\end{proof}

As a consequence of Theorem~\ref{th:DC_forms_and_HLd_are_equivalent}
discrete connection forms and discrete horizontal lifts are
alternative descriptions of a single object, the \jdef{discrete
  connections} with domain $\UDt$. In this spirit we speak of a
``discrete connection'' with domain $\UDt$ as the object defined by
either one of these maps.

\begin{remark}\label{rem:HL_is_horizontal}
  If $\DC:\UDt\rightarrow G$ is a discrete connection form on $\PBmap$
  and $h_\DC$ is the associated discrete horizontal lift, then
  $\DC(h_{\DC}(q,r)) = e$ for all $(q,r)\in \UDt'$.
\end{remark}

\begin{example}\label{ex:DC_mu-HL}
  The discrete horizontal lift associated by
  Theorem~\ref{th:DC_forms_and_HLd_are_equivalent} to the discrete
  connection form $\DCl{\mu}$ introduced in
  Example~\ref{ex:DC_mu-DC_form} is, using~\eqref{eq:HLd_from_DC},
  \begin{gather*}
    \HLdl{}{\mu}:(\R_{>0}\times U(1))\times \R_{>0}\rightarrow
    (\R_{>0}\times U(1))^2
    \stext{ so that }\\
    \HLdl{}{\mu}((r_0,h_0),r_1) :=
    ((r_0,h_0),(r_1,h_0\exp(-i(r_1-r_0)^\mu))).
  \end{gather*}
\end{example}

Next we introduce a local description of discrete connection forms.

\begin{definition}\label{def:DC_local_expression}
  Let $V\subset \PBbase$ be an open set and $s:V\rightarrow \PBtotal$
  a smooth section of $\PBtotal|_V$. Given a discrete connection
  $\DC:\UDt\rightarrow G$ we define its \jdef{local expression} with
  respect to $s$ by
  \begin{equation}\label{eq:DC-local_expression-def}
    \DCp{s}:\VDt''\rightarrow G \stext{ so that }
    \DCp{s}(m_0,m_1):=\DC(s(m_0),s(m_1)),
  \end{equation}
  where $\VDt'':=(V\times V)\cap \UDt''$
  (recall~\eqref{eq:prime_and_2prime_subsets-def} for $\UDt''$).
\end{definition}

\begin{example}\label{ex:DC_mu-DC_form-local}
  In the context of Example~\ref{ex:DC_mu-DC_form}, the principal
  bundle is trivial. So we can take $V:=\R_{>0}$ and a global section
  of $p_1$, $s(r):=(r,1)$. Then, the ``local'' expression of
  $\DCl{\mu}$ is $\DClp{\mu}{s}(r_0,r_1):=\exp(i(r_1-r_0)^\mu)$ for
  all $(r_0,r_1) \in \VDt'' = \PBbase\times \PBbase = (\R_{>0})^2$.
\end{example}

The following properties of the local expression of a discrete
connection are easy to check.

\begin{lemma}\label{le:properties_of_DC_s}
  Let $\DC:\UDt\rightarrow G$ be a discrete connection on $\PBmap$ and
  $s:V\rightarrow \PBtotal$ be a section of $\PBtotal|_V$. Then,
  $\VDt''\subset \PBbase\times \PBbase$ is open, $\DCp{s}$ is smooth and
  $\DCp{s}(m,m)=e$ for all $m\in V$. If
  $\varphi_s:V\times G\rightarrow \PBtotal$ is defined by
  $\varphi_s(m,g):=l^\PBtotal_g(s(m))$, then
  $\DC(\varphi_s(m_0,g_0),\varphi_s(m_1,g_1)) = g_1 \DCp{s}(m_0,m_1)
  g_0^{-1}$. 
\end{lemma}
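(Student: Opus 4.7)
The plan is to verify the four assertions separately; each follows directly from the definitions assembled above, with only one point requiring a small argument. First, for openness of $\VDt''$: since $V \subset \PBbase$ is open, so is $V \times V$, and $\UDt'' \subset \PBbase \times \PBbase$ was already noted to be open just after~\eqref{eq:prime_and_2prime_subsets-def}, so the intersection $\VDt'' = (V \times V) \cap \UDt''$ is open.

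For smoothness of $\DCp{s}$, I would factor it as
\[
  \VDt'' \xrightarrow{\,(s \times s)|_{\VDt''}\,} \UDt \xrightarrow{\,\DC\,} G,
\]
where both arrows are smooth. The nontrivial point is well-definedness of the first arrow, i.e., that $(s(m_0), s(m_1)) \in \UDt$ whenever $(m_0, m_1) \in \VDt''$. My argument would be: by definition of $\UDt''$ in~\eqref{eq:prime_and_2prime_subsets-def} there exists $(q_0, q_1) \in \UDt$ with $\PBmap(q_j) = m_j$; since $\PBmap(s(m_j)) = m_j$ as well, each $s(m_j)$ equals $l^\PBtotal_{g_j}(q_j)$ for some $g_j \in G$, and the $G \times G$-invariance of $\UDt$ from Definition~\ref{def:D_type_subset} then places $(s(m_0), s(m_1))$ inside $\UDt$.

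The normalization $\DCp{s}(m,m) = \DC(s(m), s(m)) = e$ is immediate from the normalization condition on $\DC$ in Definition~\ref{def:discrete_connection_form}. For the final identity I would write $\varphi_s(m_j, g_j) = l^\PBtotal_{g_j}(s(m_j))$ and apply the $G \times G$-equivariance of $\DC$ to obtain
\[
  \DC(\varphi_s(m_0,g_0), \varphi_s(m_1,g_1)) = g_1 \DC(s(m_0), s(m_1)) g_0^{-1} = g_1 \DCp{s}(m_0,m_1) g_0^{-1}.
\]
No genuine obstacle arises; the only step beyond direct substitution is the well-definedness check underlying smoothness, which is exactly what the $G \times G$-invariance built into the $D$-type condition is there to guarantee.
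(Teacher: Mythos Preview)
Your proof is correct and complete. The paper does not actually give a proof of this lemma; it simply announces the statement as ``easy to check,'' and your argument supplies exactly the routine verification the authors had in mind, including the one small point (that $(s(m_0),s(m_1))\in\UDt$ via $G\times G$-invariance of the $D$-type domain) needed to make the smoothness claim honest.
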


%%%%%%%%%%%%%%%%%%%

\subsection{Curvature of a discrete connection}
\label{sec:curvature_of_a_discrete_connection}

The curvature of a connection on a principal bundle can be seen as an
obstruction to the local trivializability of the bundle (with
connection) or, alternatively, as the obstruction to a certain map
appearing in the Atiyah sequence being a morphism of Lie
algebroids. In a similar manner, a notion of curvature of a discrete
connection on a principal bundle can be introduced as the obstruction
to the bundle (with discrete connection) being locally trivializable
(see~\cite{ar:fernandez_zuccali-holonomy_of_discrete_connections}) or,
alternatively, to a certain map appearing in the discrete Atiyah
sequence being a morphism of local Lie groupoids
(see~\cite{ar:fernandez_juchani_zuccalli-discrete_connections_on_principal_bundles_the_discrete_atiyah_sequence}).

\begin{definition}\label{def:BD}
  Let $\DC:\UDt\rightarrow G$ be a discrete connection form on
  $\PBmap$. The \jdef{curvature} of $\DC$ is the map
  \begin{equation*}
    \BD:\UDt^{(3)}\rightarrow G \stext{ so that }
    \BD(q_0,q_1,q_2) := \DC(q_0,q_2)^{-1} \DC(q_1,q_2) \DC(q_0,q_1),
  \end{equation*}
  where
  $\UDt^{(3)}:=\{(q_0,q_1,q_2)\in \PBtotal^3: (q_j,q_k)\in \UDt \text{
    for all } 0\leq j<k\leq 2\}$. A discrete connection form is
  \jdef{flat} if its curvature is constantly $e$.
\end{definition}

When $V\subset \PBbase$ is an open set and $s:V\rightarrow \PBtotal$
is a smooth section of $\PBtotal|_V$ we define the \jdef{local
  expression of the curvature} with respect to $s$ by
\begin{equation}\label{eq:BD-local_expression-def}
  \BDp{s}:{\VDt''}^{(3)}\rightarrow G \stext{ so that }
  \BDp{s}(m_0,m_1,m_2) := \BD(s(m_0),s(m_1),s(m_2)),
\end{equation}
where
\begin{equation}\label{eq:VDt''(3)-def}
  {\VDt''}^{(3)}:=\{(m_0,m_1,m_2)\in \PBbase^3: (m_j,m_k)\in \VDt'' \text{
    for all } j,k=0,1,2\}.
\end{equation}

\begin{example}\label{ex:DC_mu-curvature}
  The curvature of the discrete connection $\DCl{\mu}$ introduced in
  Example~\ref{ex:DC_mu-DC_form} is
  \begin{equation*}
    \BDl{\mu}((r_0,h_0),(r_1,h_1),(r_2,h_2)) =
    \exp(i(-(r_2-r_0)^\mu + (r_2-r_1)^\mu + (r_1-r_0)^\mu)) ,
  \end{equation*}
  for all $((r_0,h_0),(r_1,h_1),(r_2,h_2)) \in \UDt^{(3)} =
  \PBtotal^3$. Also, in the local trivialization considered in
  Example~\ref{ex:DC_mu-DC_form-local},
  \begin{equation}\label{eq:DC_mu-curvature-local_expression}
    \BDlp{\mu}{s}(r_0,r_1,r_2) =
    \exp(i(-(r_2-r_0)^\mu + (r_2-r_1)^\mu + (r_1-r_0)^\mu)),
  \end{equation}
  for all $(r_0,r_1,r_2) \in {\VDt''}^{(3)} = (R_{>0})^3$. It is easy
  to check that $\BDl{\mu} = 1$ if and only if $\mu=1$.
\end{example}

The following properties of the local expression of the curvature of a
discrete connection are easy to check.

\begin{lemma}\label{le:properties_of_BD_s}
  Let $\DC:\UDt\rightarrow G$ be a discrete connection on $\PBmap$ and
  $s:V\rightarrow \PBtotal$ a section of $\PBtotal|_V$. Then,
  $\BDp{s}$ is smooth and
  \begin{equation*}
    \BDp{s}(m_0,m_1,m_2) = \DCp{s}(m_0,m_2)^{-1} \DCp{s}(m_1,m_2)
    \DCp{s}(m_0,m_1)
  \end{equation*}
  for all $(m_0,m_1,m_2)\in {\VDt''}^{(3)}$. Also,
  \begin{equation*}
    \BD(\varphi_s(m_0,g_0),\varphi_s(m_1,g_1),\varphi_s(m_2,g_2)) = g_0
    \BDp{s}(m_0,m_1,m_2) g_0^{-1}
  \end{equation*}
  for all $(m_0,m_1,m_2)\in {\VDt''}^{(3)}$ and $g_0,g_1,g_2\in G$,
  where $\varphi_s$ is defined in Lemma~\ref{le:properties_of_DC_s}.
\end{lemma}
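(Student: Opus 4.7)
The proof will be a direct unwinding of the definitions of $\BD$, $\BDp{s}$, $\DCp{s}$ and $\varphi_s$, combined with the equivariance statement for $\DC$ recorded in Lemma~\ref{le:properties_of_DC_s}. Since each of the three assertions is essentially a matter of substitution, no genuine obstacle is anticipated; the only points requiring care are getting the order of the group factors right (as the lemma is written in a form that works for nonabelian $G$ as well) and checking that the domain ${\VDt''}^{(3)}$ is correctly mapped into $\UDt^{(3)}$ by $s$.

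First I would verify smoothness of $\BDp{s}$. By~\eqref{eq:BD-local_expression-def}, $\BDp{s}$ is the composition of $\BD$ with the map $(m_0,m_1,m_2)\mapsto (s(m_0),s(m_1),s(m_2))$. The latter is smooth because $s$ is; $\BD$ is smooth because, by its definition, it is a product and inverse of $\DC$, which is smooth by Definition~\ref{def:discrete_connection_form}. A quick sanity check that $(s(m_j),s(m_k))\in \UDt$ whenever $(m_j,m_k)\in \VDt''$ follows from the definition $\VDt''=(V\times V)\cap\UDt''$ and the fact that $(s(m_j),s(m_k))$ projects to $(m_j,m_k)\in \UDt''$ and that $\UDt$ is $G\times G$-invariant.

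Next, I would establish the first displayed identity by substitution. From~\eqref{eq:BD-local_expression-def}, Definition~\ref{def:BD} and~\eqref{eq:DC-local_expression-def} applied in turn,
\begin{equation*}
  \BDp{s}(m_0,m_1,m_2)
  = \BD(s(m_0),s(m_1),s(m_2))
  = \DC(s(m_0),s(m_2))^{-1}\DC(s(m_1),s(m_2))\DC(s(m_0),s(m_1)),
\end{equation*}
which equals $\DCp{s}(m_0,m_2)^{-1}\DCp{s}(m_1,m_2)\DCp{s}(m_0,m_1)$, as required.

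Finally, for the equivariance-type formula I would expand $\BD$ using Definition~\ref{def:BD} on the triple $(\varphi_s(m_0,g_0),\varphi_s(m_1,g_1),\varphi_s(m_2,g_2))$ and then apply the last identity of Lemma~\ref{le:properties_of_DC_s} to each of the three $\DC$-factors. This produces
\begin{equation*}
  (g_2\DCp{s}(m_0,m_2)g_0^{-1})^{-1}(g_2\DCp{s}(m_1,m_2)g_1^{-1})(g_1\DCp{s}(m_0,m_1)g_0^{-1}),
\end{equation*}
in which the inner pairs $g_2^{-1}g_2$ and $g_1^{-1}g_1$ cancel, leaving $g_0\DCp{s}(m_0,m_2)^{-1}\DCp{s}(m_1,m_2)\DCp{s}(m_0,m_1)g_0^{-1}=g_0\BDp{s}(m_0,m_1,m_2)g_0^{-1}$ by the previous step. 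Note the cancellations happen regardless of whether $G$ is abelian, which is why the formula is stated with the conjugation by $g_0$ intact; the main thing to watch is the exact position of the inversion in the first $\DC$-factor so that $g_0$ ends up on the left and $g_0^{-1}$ on the right.
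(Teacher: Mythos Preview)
Your proposal is correct and is exactly the direct verification the paper has in mind; the paper in fact omits the proof entirely, prefacing the lemma with ``The following properties \ldots\ are easy to check.'' Your substitution argument, including the domain check that $(s(m_j),s(m_k))\in\UDt$ via $G\times G$-invariance and the cancellation of the $g_1,g_2$ factors, is the intended computation.
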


%%%%%%%%%%%%%%%%%%%

\subsection{Parallel transport associated to a discrete connection}
\label{sec:parallel_transport_associated_to_a_discrete_connection}

Given a set $X$ and $N\in\NZ$, a \jdef{discrete path} of length $N$ is
an element $x_\cdot :=(x_0,\ldots,x_N)\in X^{N+1}$. The initial and
final points of $x_\cdot$ are $x_0$ and $x_N$ respectively. When
$x_0=x_N$, $x_\cdot$ is a discrete $N$-\jdef{loop}. The set of all
discrete paths of length $N$ with initial point $\conj{x}$ and final
point $\conj{x}'$ is denoted by $\Omega_N(\conj{x},\conj{x}')$ and the
discrete $N$-loops with initial point $\conj{x}$ is denoted by
$\Omega_N(\conj{x})$. Last, we define
$\Omega(\conj{x},\conj{x}') := \cup_{N=0}^\infty
\Omega_N(\conj{x},\conj{x}')$ and
$\Omega(\conj{x}) := \Omega(\conj{x},\conj{x})$.

In what follows, it will be necessary to consider discrete paths that
satisfy some restrictions. Thus, given $U\subset X\times X$ and
$N\in\NZ$ we define the following sets of discrete paths
\jdef{subordinated} to $U$:
$\Omega_{N,U}(\conj{x},\conj{x}'):=\{x_\cdot \in
\Omega_{N}(\conj{x},\conj{x}') : (x_{k-1},x_k)\in U \text{ for all }
k=1,\ldots, N\}$,
$\Omega_{N,U}(\conj{x}) := \Omega_{N,U}(\conj{x},\conj{x})$,
$\Omega_U(\conj{x},\conj{x}') := \cup_{N=0}^\infty
\Omega_{N,U}(\conj{x},\conj{x}')$ and
$\Omega_U(\conj{x}) := \Omega_{U}(\conj{x},\conj{x})$.

Let $\UDt\subset \PBtotal\times \PBtotal$ be a $D$-type subset and
$\HLd{}:\UDt'\rightarrow \PBtotal\times \PBtotal$ be a discrete
horizontal lift on the principal bundle $\PBmap$. Then, for any
discrete path in $\PBbase$,
$m_\cdot \in \Omega_{N,\UDt''}(\conj{m},\conj{m}')$ and
$\conj{q}\in \PBtotal|_{\conj{m}}$ we define inductively
$q_0:=\conj{q}$ and, for each $k=1,\ldots,N$,
$q_k:=\HLds{}(q_{k-1},m_k)$. It is easy to verify that, as
$(m_{k-1},m_k)\in \UDt''$ for all $k=1,\ldots,N$, all
$(q_{k-1},m_k)\in \UDt'$, so that each $q_k$ is well defined and
$\PBmap(q_k)=m_k$. Let
$q_\cdot:=(q_0,\ldots,q_N) \in \Omega_{N}(\conj{q},q_N)$; by
construction, $\PBmap(q_N) = m_N=\conj{m}'$. The discrete path
$q_\cdot$ is called the \jdef{discrete horizontal lift} of $m_\cdot$
starting at $\conj{q}$.  This leads to the following notion.

\begin{definition}\label{def:PTd}
  Let $\UDt\subset \PBtotal\times \PBtotal$ be a $D$-type subset and
  $\HLd{}:\UDt'\rightarrow \PBtotal\times \PBtotal$ be a discrete
  horizontal lift on $\PBmap$. For each
  $m_\cdot \in \Omega_{N,\UDt''}(\conj{m},\conj{m}')$ we define the
  \jdef{discrete parallel transport map} over $m_\cdot$
  \begin{equation*}
    \PTd:\PBtotal|_{\conj{m}}\rightarrow \PBtotal|_{\conj{m}'}
    \stext{ so that }
    \PTd(m_\cdot)(\conj{q}) := q_N,
  \end{equation*}
  where $q_N$ is the one constructed in the previous paragraph.
\end{definition}

Of interest for our analysis is the special case where one considers
parallel transport over discrete loops, so that $\PTd$ is a map from a
fiber of $\PBtotal$ onto itself.

\begin{definition}\label{def:holonomy_around_a_loop}
  Let $\UDt\subset \PBtotal\times \PBtotal$ be a $D$-type subset and
  $\HLd{}:\UDt'\rightarrow \PBtotal\times \PBtotal$ be a discrete
  horizontal lift on $\PBmap$. For each
  $m_\cdot \in \Omega_{N,\UDt''}(\conj{m})$ and
  $\conj{q}\in \PBtotal|_{\conj{m}}$ we define the \jdef{discrete
    holonomy phase} around $m_\cdot$ starting at $\conj{q}$ as
  \begin{equation*}
    \HPd(m_\cdot,\conj{q}) :=
    \kappa(\conj{q},\PTd(m_\cdot)(\conj{q})) \in G.
  \end{equation*}
\end{definition}

The discrete holonomy phase is well defined because
$\PBmap(\PTd(m_\cdot)(\conj{q})) = \PBmap(q_N) = m_N = \conj{m} =
\PBmap(\conj{q})$.

\begin{example}\label{ex:DC_mu-DC_form-PTd}
  The parallel transport operator associated to the discrete
  connection form $\DCl{\mu}$ defined in
  Example~\ref{ex:DC_mu-DC_form} is constructed as follows. For
  $\conj{r},\conj{r}'\in\R_{>0}$ fix a discrete path in $\R_{>0}$,
  $r_\cdot \in \Omega_N(\conj{r},\conj{r}')$ ---we ignore $\UDt''$
  because $\DCl{\mu}$ is globally defined--- and choose
  $\conj{q}:=(\conj{r},\conj{h}) \in \PBtotal|_{\conj{r}}$. Then, using
  $\HLdl{}{\mu}$ computed in Example~\ref{ex:DC_mu-HL}, the discrete
  lifted path of $r_\cdot$ starting at $\conj{q}$ is given by
  \begin{equation*}
    q_k =
    \begin{cases}
      \conj{q}, \stext{ if } k=0,\\
      (r_k,\conj{h}\, \exp(-i\sum_{j=1}^k(r_j-r_{j-1})^\mu)), \stext{
        if } k=1,\ldots, N.
    \end{cases}
  \end{equation*}
  Therefore
  $\PTd(\conj{r},\conj{h}) = (\conj{r}',(r_k,\conj{h}\,
  \exp(-i\sum_{j=1}^N(r_j-r_{j-1})^\mu))$ and
  \begin{equation*}
    \HPd(r_\cdot,(\conj{r},\conj{h})) =
    \exp\left(-i\sum_{j=1}^N(r_j-r_{j-1})^\mu\right).
  \end{equation*}
\end{example}

A natural question is how the discrete holonomy phase
$\HPd(m_\cdot,\conj{q})$ changes when $\conj{q}$ is replaced by
$l^\PBtotal_g(\conj{q})$.

\begin{prop}\label{prop:phase_and_change_in_fiber}
  Let $\HLd{}:\UDt'\rightarrow \PBtotal\times \PBtotal$ be a discrete
  horizontal lift on $\PBmap$, $\conj{m}\in \PBbase$,
  $\conj{q}\in \PBtotal|_{\conj{m}}$ and
  $m_\cdot\in \Omega_N(\conj{m})$. Then, for any $g\in G$,
  \begin{enumerate}
  \item\label{it:phase_and_change_in_fiber-path} If
    $q_\cdot\in \Omega_N(\conj{q},q_N)$ is the discrete horizontal
    lift of the path $m_\cdot$ starting at $\conj{q}$, the path
    $q'_\cdot$ defined by $q'_k:=l^\PBtotal_g(q_k)$ for $k=0,\ldots,N$ is the
    discrete horizontal lift of $m_\cdot$ starting at
    $l^\PBtotal_g(\conj{q})$.
  \item\label{it:phase_and_change_in_fiber-phase} In addition,
    $\HPd(m_\cdot,l^\PBtotal_g(\conj{q})) = g \HPd(m_\cdot,\conj{q})
    g^{-1}$.
  \end{enumerate}
\end{prop}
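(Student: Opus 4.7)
The plan is to prove part (1) by induction on the length $N$ of the path, using the $G$-equivariance of the discrete horizontal lift $\HLd{}$, and then deduce part (2) directly by unwinding the definition of $\kappa$ and applying part (1). The argument is essentially routine once the equivariance of $\HLd{}$ is invoked; there is no real obstacle.

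For part (1), the key observation is that the $G$-equivariance of $\HLd{}$ (condition~(1) in Definition~\ref{def:HLd}) between the actions $l^{\PBtotal\times\PBbase}$ and $l^{\PBtotal\times\PBtotal}$, combined with the fact that $\HLd{}$ is a section of $id_\PBtotal\times \PBmap$ (condition~(2) of the same definition, so that the first component of $\HLd{}(q,m)$ equals $q$), yields after projection to the second factor the identity $\HLds{}(l^\PBtotal_g(q),m) = l^\PBtotal_g(\HLds{}(q,m))$ for every $(q,m)\in \UDt'$ and $g\in G$. The base case $k=0$ is immediate from $q'_0 = l^\PBtotal_g(\conj{q}) = l^\PBtotal_g(q_0)$. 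For the inductive step, assuming $q'_{k-1} = l^\PBtotal_g(q_{k-1})$, one computes
\begin{equation*}
  q'_k = \HLds{}(q'_{k-1},m_k) = \HLds{}(l^\PBtotal_g(q_{k-1}),m_k)
  = l^\PBtotal_g(\HLds{}(q_{k-1},m_k)) = l^\PBtotal_g(q_k),
\end{equation*}
closing the induction and showing that $q'_\cdot$ is indeed the discrete horizontal lift of $m_\cdot$ starting at $l^\PBtotal_g(\conj{q})$.

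For part (2), set $h := \HPd(m_\cdot,\conj{q}) = \kappa(\conj{q},q_N)$, which by the defining property of $\kappa$ means $l^\PBtotal_h(\conj{q}) = q_N$. By part (1), $\PTd(m_\cdot)(l^\PBtotal_g(\conj{q})) = q'_N = l^\PBtotal_g(q_N)$, and combining these identities,
\begin{equation*}
  l^\PBtotal_g(q_N) = l^\PBtotal_g(l^\PBtotal_h(\conj{q}))
  = l^\PBtotal_{gh}(\conj{q})
  = l^\PBtotal_{ghg^{-1}}(l^\PBtotal_g(\conj{q})).
\end{equation*}
Again by the defining property of $\kappa$, this means $\kappa(l^\PBtotal_g(\conj{q}),l^\PBtotal_g(q_N)) = ghg^{-1}$, so that $\HPd(m_\cdot,l^\PBtotal_g(\conj{q})) = g\HPd(m_\cdot,\conj{q})g^{-1}$, as desired. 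The only point I would state carefully is the passage from the $G$-equivariance of the pair $\HLd{}$ in $\PBtotal\times \PBtotal$ to the simpler equivariance of $\HLds{}=p_2\circ \HLd{}$ used throughout the induction.
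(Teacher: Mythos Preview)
Your proof is correct and follows essentially the same route as the paper's: part~(1) via the $G$-equivariance of $\HLd{}$ (the paper merely says this ``follows by the definition of discrete horizontal lift and property~\eqref{it:HL_properties-HL_is_smooth}'' where you spell out the induction), and part~(2) by unwinding $\kappa$ on the $g$-translated endpoints (the paper invokes the identity $\kappa(l^\PBtotal_g(q_0),l^\PBtotal_g(q_1))=g\,\kappa(q_0,q_1)\,g^{-1}$ directly, which is exactly what your computation establishes).
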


\begin{proof}
  By definition, $q'_\cdot\in
  \Omega_N(l^\PBtotal_g(\conj{q}),l^\PBtotal_g(q_N))$. That $q'_\cdot$ is the
  discrete horizontal lift of $m_\cdot$ starting at $l^\PBtotal_g(\conj{q})$
  follows by the definition of discrete horizontal lift and
  property~\eqref{it:HL_properties-HL_is_smooth} in
  Definition~\ref{def:HLd}. Thus,
  \begin{equation*}
    \begin{split}
      \HPd(m_\cdot,l^\PBtotal_g(\conj{q})) =&
      \kappa(l^\PBtotal_g(\conj{q}),\PTd(m_\cdot)(l^\PBtotal_g(\conj{q})))
      =
      \kappa(l^\PBtotal_g(\conj{q}),l^\PBtotal_g(\PTd(m_\cdot)(\conj{q})))
      \\=& g \kappa(\conj{q},\PTd(m_\cdot)(\conj{q})) g^{-1} =
      g\HPd(m_\cdot,\conj{q}) g^{-1}.
    \end{split}
  \end{equation*}
\end{proof}

The following two local results will allow us, later, to find explicit
formulas relating the holonomy phase and the curvature of a discrete
connection.

\begin{lemma}\label{le:local_expression_of_parallel_transport}
  Let $\DC:\UDt\rightarrow G$ be a discrete connection form on
  $\PBmap$, $V\subset \PBbase$ be an open subset and
  $s:V\rightarrow \PBtotal$ be a section of $\PBmap$. Then, for each
  $m_\cdot \in \Omega_{N,\VDt''}(m_0,m_N)$ (with $\VDt''$ as in
  Definition~\ref{def:DC_local_expression}), the following assertions
  are true.
  \begin{enumerate}
  \item\label{it:local_expression_of_parallel_transport-gs} Let
    $q_\cdot\in \Omega_N(q_0,q_N)$ be the discrete horizontal lift
    path of $m_\cdot$ starting at $q_0\in \PBtotal|_{m_0}$. Then,
    there are unique $g_k\in G$ so that, for $\varphi_s$ as in
    Lemma~\ref{le:properties_of_DC_s}, $\varphi_s(q_k) = (m_k,g_k)$
    for all $k=0,\ldots,N$.
  \item\label{it:local_expression_of_parallel_transport-pt} We have
    $\PTd(m_\cdot)(\varphi_s(m_0,g_0)) = \varphi_s(m_N,g_N)$ for
    $g_N:=g_0\prod_{k=1}^N \DCp{s}(m_{k-1},m_k)^{-1}$.
  \end{enumerate}
\end{lemma}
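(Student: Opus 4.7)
The plan is to prove (1) by exploiting that the section $s$ gives rise to the standard trivialization $\varphi_s$ of $\PBmap$ over $V$, and then to prove (2) by iterating a one-step recursion for the $g_k$ that comes from the horizontality of $(q_{k-1},q_k)$ together with the $G$-equivariance identity of Lemma~\ref{le:properties_of_DC_s}.

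For part~(1), note first that the hypothesis $m_\cdot\in \Omega_{N,\VDt''}(m_0,m_N)$ forces $m_k\in V$ for every $k$, since $\VDt''\subset V\times V$. Therefore $q_k\in \PBmap^{-1}(V)$ for every $k$, and because $\varphi_s:V\times G\rightarrow \PBmap^{-1}(V)$ is the $G$-equivariant diffeomorphism induced by $s$, there is a unique $g_k\in G$ with $q_k=\varphi_s(m_k,g_k)$. In particular $m_k$ is just $\PBmap(q_k)$ and the assignment $k\mapsto g_k$ is well defined.

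For part~(2), I would derive the recursion $g_k=g_{k-1}\,\DCp{s}(m_{k-1},m_k)^{-1}$ as follows. By definition of the discrete horizontal lift, $q_k=\HLds{}(q_{k-1},m_k)$, i.e. $(q_{k-1},q_k)=h_\DC(q_{k-1},m_k)$, so Remark~\ref{rem:HL_is_horizontal} gives $\DC(q_{k-1},q_k)=e$. Substituting $q_{k-1}=\varphi_s(m_{k-1},g_{k-1})$ and $q_k=\varphi_s(m_k,g_k)$ into the transformation identity
\begin{equation*}
  \DC(\varphi_s(m_{k-1},g_{k-1}),\varphi_s(m_k,g_k))=g_k\,\DCp{s}(m_{k-1},m_k)\,g_{k-1}^{-1}
\end{equation*}
from Lemma~\ref{le:properties_of_DC_s}, and using $(m_{k-1},m_k)\in \VDt''$ to ensure the right-hand side is defined, we obtain $g_k\,\DCp{s}(m_{k-1},m_k)\,g_{k-1}^{-1}=e$, equivalently $g_k=g_{k-1}\,\DCp{s}(m_{k-1},m_k)^{-1}$.

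Iterating this recursion from $k=1$ to $k=N$ (the order of the factors matters, but the induction is immediate with the convention $\prod_{k=1}^N a_k=a_1 a_2\cdots a_N$) yields $g_N=g_0\prod_{k=1}^N \DCp{s}(m_{k-1},m_k)^{-1}$. Since $\PTd(m_\cdot)(q_0)=q_N$ by Definition~\ref{def:PTd}, this gives $\PTd(m_\cdot)(\varphi_s(m_0,g_0))=\varphi_s(m_N,g_N)$, as required. There is no real obstacle here; the only point that needs care is the bookkeeping to verify that each pair $(q_{k-1},q_k)$ lies in $\UDt$ (so that $\DC$ can be evaluated on it) and each pair $(m_{k-1},m_k)$ lies in $\VDt''$ (so that $\DCp{s}$ can be evaluated on it), both of which are guaranteed by the subordination $m_\cdot\in \Omega_{N,\VDt''}(m_0,m_N)$ together with the fact that $\HLd{}$ takes values in $\UDt$.
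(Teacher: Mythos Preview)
Your proof is correct and follows essentially the same approach as the paper: both arguments use the trivialization $\varphi_s$ to extract the $g_k$, invoke Remark~\ref{rem:HL_is_horizontal} to get $\DC(q_{k-1},q_k)=e$, apply the equivariance identity from Lemma~\ref{le:properties_of_DC_s} to obtain the recursion $g_k=g_{k-1}\,\DCp{s}(m_{k-1},m_k)^{-1}$, and then iterate. Your closing remark about why $(q_{k-1},q_k)\in\UDt$ is a nice bit of extra care that the paper leaves implicit.
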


\begin{proof}
  As, $\PBmap(q_k) = m_k$ and
  $(m_{k-1},m_k)\in \VDt''\subset V\times V$, for all $k$, we see that
  $q_k\in
  \PBtotal|_V$. Point~\eqref{it:local_expression_of_parallel_transport-gs}
  follows immediately because $\varphi_s$ trivializes $\PBtotal|_{V}$.

  By definition of $\PTd$ and
  point~\eqref{it:local_expression_of_parallel_transport-gs} we have
  that $\PTd(m_\cdot)(\varphi_s(m_0,g_0)) = \varphi_s(m_N,g_N)$. Then,
  as $(q_{k-1},q_{k}) = h_{\DC}(q_{k-1},m_k)$ and
  $\varphi_s(m,g)=l^\PBtotal_g(s(m))$, recalling
  Remark~\ref{rem:HL_is_horizontal}, we have
  \begin{equation*}
    \begin{split}
      e =& \DC(q_{k-1},q_k) = \DC(\varphi_s(m_{k-1},g_{k-1}),
      \varphi_s(m_k,g_k)) \\=& \DC(l^\PBtotal_{g_{k-1}}(s(m_{k-1})),
      l^\PBtotal_{g_k}(s(m_k))) = g_k \DCp{s}(m_{k-1},m_k) g_{k-1}^{-1}.
    \end{split}
  \end{equation*}
  Thus, $g_k = g_{k-1} \DCp{s}(m_{k-1},m_k)^{-1}$ for all
  $k$. Point~\eqref{it:local_expression_of_parallel_transport-pt}
  follows by applying this formula recursively.
\end{proof}

\begin{prop}\label{prop:product_of_DC_and_product_of_BD}
  Let $\DC:\UDt\rightarrow G$ be a discrete connection on
  $\PBmap$. Then, for each $N \geq 2$, the following statements are
  true.
  \begin{enumerate}
  \item\label{it:product_of_DC_and_product_of_BD-no_section} For each
    $q_\cdot\in \Omega_N(\conj{m},\conj{m}')$ such that
    $(q_0,q_k,q_{k+1}) \in \UDt^{(3)}$ for all $k=1,\ldots,N-1$ we
    have
    \begin{equation*}
      \left( \prod_{k=1}^{N} \DC(q_{k-1},q_{k})^{-1}\right) \DC(q_0,q_{N}) =
      \prod_{k=1}^{N-1} \BD(q_0,q_k,q_{k+1})^{-1}.
    \end{equation*}
  \item\label{it:product_of_DC_and_product_of_BD-section} Let
    $V\subset \PBbase$ be an open subset and $s:V\rightarrow \PBtotal$
    be a section of $\PBtotal$. Then, for each
    $m_\cdot\in \Omega_{N}(\conj{m})$ such that
    $(m_0,m_k,m_{k+1}) \in {\VDt''}^{(3)}$
    (see~\eqref{eq:VDt''(3)-def}) for all $k=0,\ldots,N-1$ and
    $\conj{q}\in \PBtotal|_{\conj{m}}$ we have
    \begin{equation}\label{eq:product_of_DC_and_product_of_BD-phase_furmula}
      \HPd(m_\cdot,\conj{q}) =
      g_0\left(\prod_{k=1}^N \DCp{s}(m_{k-1},m_k)^{-1}\right) g_0^{-1} =
      \prod_{k=1}^{N-1} \BD(q_0,q_k,q_{k+1})^{-1},
    \end{equation}
    where $q_\cdot\in \Omega_N(\conj{q})$ is the discrete horizontal
    lift of $m_\cdot$ an $g_0$ satisfies
    $\varphi_s(m_0,g_0) = \conj{q}$.
  \end{enumerate}
\end{prop}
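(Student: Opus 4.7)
My plan is to handle the two parts in order, treating (1) as the combinatorial backbone and (2) as a deduction from (1) together with the horizontality and equivariance of $\DC$.

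For part~\eqref{it:product_of_DC_and_product_of_BD-no_section}, I would proceed by induction on $N$. The base case $N=2$ is immediate from Definition~\ref{def:BD}: inverting $\BD(q_0,q_1,q_2) = \DC(q_0,q_2)^{-1}\DC(q_1,q_2)\DC(q_0,q_1)$ yields exactly $\DC(q_0,q_1)^{-1}\DC(q_1,q_2)^{-1}\DC(q_0,q_2)$. For the inductive step, writing out $\BD(q_0,q_{N},q_{N+1})^{-1} = \DC(q_0,q_N)^{-1}\DC(q_N,q_{N+1})^{-1}\DC(q_0,q_{N+1})$ shows that multiplying the inductive hypothesis on the right by this factor produces a clean telescoping: the trailing $\DC(q_0,q_N)$ from the hypothesis cancels with the leading $\DC(q_0,q_N)^{-1}$ of the new factor, leaving exactly the required identity at level $N+1$. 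The only subtlety is keeping the product order consistent with the left-to-right convention; no commutativity is needed.

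For the first equality in part~\eqref{it:product_of_DC_and_product_of_BD-section}, I would apply Lemma~\ref{le:local_expression_of_parallel_transport}\eqref{it:local_expression_of_parallel_transport-pt} to obtain $\PTd(m_\cdot)(\conj{q}) = \varphi_s(m_N,g_N) = \varphi_s(m_0,g_N)$ with $g_N = g_0\prod_{k=1}^N\DCp{s}(m_{k-1},m_k)^{-1}$. Since both $\conj{q} = \varphi_s(m_0,g_0)$ and $\PTd(m_\cdot)(\conj{q}) = \varphi_s(m_0,g_N)$ sit in the same fiber and $\varphi_s(m_0,g) = l^\PBtotal_g(s(m_0))$, the definition of $\kappa$ gives $\kappa(\conj{q},\PTd(m_\cdot)(\conj{q})) = g_N g_0^{-1}$, which unpacks to the first expression in~\eqref{eq:product_of_DC_and_product_of_BD-phase_furmula}.

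For the second equality, I apply part~\eqref{it:product_of_DC_and_product_of_BD-no_section} to the horizontal lift $q_\cdot\in\Omega_N(\conj{q})$ of $m_\cdot$. By Remark~\ref{rem:HL_is_horizontal} each factor $\DC(q_{k-1},q_k)$ equals $e$, so the left-hand side of the identity from~\eqref{it:product_of_DC_and_product_of_BD-no_section} collapses to $\DC(q_0,q_N) = \DC(\conj{q},\PTd(m_\cdot)(\conj{q}))$. Writing $h := \kappa(\conj{q},\PTd(m_\cdot)(\conj{q})) = \HPd(m_\cdot,\conj{q})$, so that $\PTd(m_\cdot)(\conj{q}) = l^\PBtotal_h(\conj{q})$, the equivariance of $\DC$ (Definition~\ref{def:discrete_connection_form}) applied with $g_0=e$, $g_1=h$ and the normalization $\DC(\conj{q},\conj{q})=e$ yields $\DC(\conj{q},l^\PBtotal_h(\conj{q})) = h$. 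Hence $\HPd(m_\cdot,\conj{q})$ equals the right-hand side of~\eqref{eq:product_of_DC_and_product_of_BD-phase_furmula}, completing the proof.

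The main obstacle, if any, is bookkeeping: making sure the non-commutative product order and the ${}^{-1}$ conventions line up precisely so that part~(1) really is a telescoping identity, and that the equivariance computation $\DC(\conj{q},l^\PBtotal_h(\conj{q}))=h$ is used with the correct left/right convention. Nothing here requires $G$ to be abelian.
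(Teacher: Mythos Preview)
Your argument is correct. Part~\eqref{it:product_of_DC_and_product_of_BD-no_section} and the first equality in~\eqref{eq:product_of_DC_and_product_of_BD-phase_furmula} match the paper's proof exactly (induction on $N$, then Lemma~\ref{le:local_expression_of_parallel_transport} followed by $\kappa(\varphi_s(m_0,g_0),\varphi_s(m_0,g_N))=g_Ng_0^{-1}$).

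For the second equality your route differs from the paper's. The paper applies part~\eqref{it:product_of_DC_and_product_of_BD-no_section} to the \emph{section path} $(s(m_0),\ldots,s(m_N))$: there the trailing factor $\DC(s(m_0),s(m_N))$ vanishes because $m_N=m_0$, yielding $g_0\bigl(\prod_{k}\BDp{s}(m_0,m_k,m_{k+1})^{-1}\bigr)g_0^{-1}$, and a final appeal to the equivariance formula of Lemma~\ref{le:properties_of_BD_s} converts this into $\prod_k\BD(q_0,q_k,q_{k+1})^{-1}$. You instead apply part~\eqref{it:product_of_DC_and_product_of_BD-no_section} to the \emph{horizontal lift} $q_\cdot$ itself, so that horizontality kills the product on the left and only $\DC(q_0,q_N)$ survives; then the equivariance of $\DC$ on the fiber identifies this with $\HPd(m_\cdot,\conj{q})$. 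Your route is slightly more direct (it bypasses Lemma~\ref{le:properties_of_BD_s} entirely), while the paper's route makes the intermediate appearance of $\BDp{s}$ explicit, which it reuses later. One small point worth stating explicitly in your write-up: the hypothesis $(q_0,q_k,q_{k+1})\in\UDt^{(3)}$ needed to invoke part~\eqref{it:product_of_DC_and_product_of_BD-no_section} for $q_\cdot$ follows from $(m_0,m_k,m_{k+1})\in{\VDt''}^{(3)}$ via the $G\times G$-invariance of $\UDt$.
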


\begin{proof}
  Point~\eqref{it:product_of_DC_and_product_of_BD-no_section} can be
  readily checked by induction (on $N$). In order to prove
  point~\eqref{it:product_of_DC_and_product_of_BD-section} we notice
  that, since $(m_0,m_k,m_{k+1}) \in {\VDt''}^{(3)}$ for all
  $k=0,\ldots,N-1$, it follows that
  $m_\cdot\in \Omega_{N,\VDt''}(\conj{m})$. Hence, by
  Lemma~\ref{le:local_expression_of_parallel_transport} there exists
  the discrete horizontal lift $q_\cdot$ of $m_\cdot$ and, writing
  $q_k = \varphi_s(m_k,g_k)$ for all $k$, it satisfies
  $\PTd(m_\cdot)(\varphi_s(m_0,g_0)) =\varphi_s(m_N,g_N)$ for
  $g_N=g_0\prod_{k=1}^N \DCp{s}(m_{k-1},m_k)^{-1}$. Then,
  \begin{equation*}
    \begin{split}
      \HPd(m_\cdot,\conj{q}) =&
      \kappa(q_0,\PTd(m_\cdot)(\varphi_s(m_0,g_0))) =
      \kappa(\varphi_s(m_0,g_0),\varphi_s(\underbrace{m_N}_{=m_0},g_N))
      \\=& \kappa(l^\PBtotal_{g_0}(s(m_0)),l^\PBtotal_{g_N}(s(m_0))) =
      g_N g_0^{-1} = g_0\prod_{k=1}^N \DCp{s}(m_{k-1},m_k)^{-1}
      g_0^{-1},
    \end{split}
  \end{equation*}
  proving the first equality
  of~\eqref{eq:product_of_DC_and_product_of_BD-phase_furmula}. In
  order to prove the second equality, it is easy to check that, as
  $(m_0,m_k,m_{k+1})\in {\VDt''}^{(3)}$ for all $k$, we have
  $(s(m_0),s(m_k),s(m_{k+1}))\in {\UDt}^{(3)}$ for all $k$. Then,
  using the previous computation and the result of
  point~\eqref{it:product_of_DC_and_product_of_BD-no_section} we have
  \begin{equation*}
    \begin{split}
      \HPd(m_\cdot,\conj{q}) =& g_0\left(\prod_{k=1}^N
        \DCp{s}(m_{k-1},m_k)^{-1}\right) g_0^{-1} = g_0
      \left(\prod_{k=1}^N \DC(s(m_{k-1}),s(m_k))^{-1}\right) g_0^{-1}
      \\=& g_0 \left(\prod_{k=1}^{N-1}
        \BD(s(m_0),s(m_k),s(m_{k+1}))^{-1}\right)
      \underbrace{\DC(s(m_0),s(\overbrace{m_N}^{=m_0}))^{-1}}_{=e}
      g_0^{-1} \\=& g_0 \left(\prod_{k=1}^{N-1}
        \BDp{s}(m_0,m_k,m_{k+1})^{-1}\right) g_0^{-1}.
    \end{split}
  \end{equation*}
  The second equality
  of~\eqref{eq:product_of_DC_and_product_of_BD-phase_furmula} now
  follows from Lemma~\ref{le:properties_of_BD_s}.
\end{proof}

\begin{remark}
  By~\eqref{eq:product_of_DC_and_product_of_BD-phase_furmula}, we see
  that all the discrete holonomy phases (for loops satisfying the
  conditions of
  point~\eqref{it:product_of_DC_and_product_of_BD-section} of
  Proposition~\ref{prop:product_of_DC_and_product_of_BD}) are products
  of values of the (inverse of the) curvature of the discrete
  connection. In this sense, this result can be seen as a seminal
  observation towards a discrete analogue of the Ambrose--Singer
  Theorem.
\end{remark}

%%%%%%%%%%%%%%%%%%%%%%%%%%%%%%%%%%%%%%%%%%%%%%%

\section{Singular (co)homology and adaptations}
\label{sec:singular_cohomology_and_adaptations}

In this section we review some basic standard notions used in singular
homology theory (see, for
instance,~\cite{bo:munkres-elements_of_algebraic_topology}) and, then,
make an adaptation to have a theory that works with discrete
connection forms and curvatures.

Let $X$ be a topological space, $n\in\NZ$ and $\{e_0,\ldots,e_n\}$ be
the canonical basis of $\R^{n+1}$\footnote{In this context, it is
  convenient to consider $\R^n\subset \R^\N$ as the subset consisting
  of sequences vanishing after the $n$-th component. Thus, $\R^{n-1}$
  is a subspace of $\R^n$ and the inclusion a linear map.}.

\begin{definition}\label{def:singular_chains}
  Given $n\in\NZ$, the set
  \begin{equation*}
    \Delta_n:=\left\{\sum_{j=0}^n t_j e_j\in\R^{n+1}: t_j\geq 0 \text{ for }
    j=0,\ldots, n \text{ and } \sum_{j=0}^n t_j=1\right\}
  \end{equation*}
  is called the \jdef{standard $n$-simplex}. A \jdef{singular
    $n$-simplex} of $X$ is a continuous map
  $T_n:\Delta_n\rightarrow X$. A \jdef{singular $n$-chain} of $X$ is
  an element of the free abelian group generated by the singular
  $n$-simplexes, that we denote by $S_n(X)$. When $n\in\N$ and
  $k=0,\ldots,n$, the $k$-th face of the standard $n$-simplex is the
  map $\del_n^k:\Delta_{n-1}\rightarrow\Delta_{n}$ defined by
  \begin{gather*}
    \del_n^0(\sum_{j=0}^{n-1}t_je_j) := \sum_{j=1}^n t_{j-1} e_j,\\
    \del_n^k(\sum_{j=0}^{n-1}t_je_j) := \sum_{j=0}^{k-1} t_je_j +
    \sum_{j=k+1}^n t_{j-1}e_j, \stext{ if } k=1,\ldots,n.
  \end{gather*}
  For $n\in\N$ the \jdef{$n$-th boundary map} is the group
  homomorphism $\del_n:S_n(X)\rightarrow S_{n-1}(X)$ defined by
  \begin{equation*}
    \del_n(T):=\sum_{k=0}^n (-1)^k (T\circ \del_n^k).
  \end{equation*}
\end{definition}

We observe that, for $n\in \N$ and $e_j\in \Delta_{n-1}$, we have
\begin{equation*}
  \del_n^k(e_j) =
  \begin{cases}
    e_{j+1} \in \Delta_n \stext{ if } k=0,\ldots,j,\\
    e_j \in \Delta_n \stext{ if } k=j+1,\ldots,n.
  \end{cases}
\end{equation*}

\begin{example}\label{ex:singular_12_chains}
  Let $T_1$ be a singular $1$-simplex of $X$, then
  $\del_1T_1 \in S_0(X)$ satisfies
  $(\del_1 T_1)(1) = (T_1\circ\del_1^0)(1) - (T_1 \circ\del_1^1)(1) =
  T_1(0,1) - T_1(1,0)$. Similarly, if $T_2$ is a singular $2$-simplex
  of $X$, then $\del_2T_2 \in S_1(X)$ satisfies
  $(\del_2T_2)(t_0,t_1) = (T_2\circ \del_2^0)(t_0,t_1) - (T_2\circ
  \del_2^1)(t_0,t_1) + (T_2\circ \del_2^2)(t_0,t_1) = T_2(0,t_0,t_1) -
  T_2(t_0,0,t_1) + T_2(t_0,t_1,0)$.
\end{example}

\begin{definition}
  Let $A$ be an abelian group and $n\in\NZ$. The group
  $S^n(X,A):=\hom(S_n(X),A)$ is the group of \jdef{singular
    $n$-cochains} of $X$ and the group homomorphism
  $\delta_n:S^n(X,A)\rightarrow S^{n+1}(X,A)$ defined by
  $\delta_n\alpha_n := \alpha_n\circ \del_{n+1}$ is the \jdef{singular
    $n$-coboundary} of $X$.
\end{definition}

\begin{example}\label{ex:singular_12_cochains}
  Let $\alpha_0\in S^0(X,A)$ and $\alpha_1\in S^1(X,A)$; using the
  computations from Example~\ref{ex:singular_12_chains}, for any
  singular $1$-simplex $T_1$ of $X$ we have
  \begin{equation*}
    (\delta_0\alpha_0)(T_1) = \alpha_0(\del_1 T_1) =
    \alpha_0(T_1(0,1)-T_1(1,0)) = \alpha_0(T_1(0,1)) - \alpha_0(T_1(1,0)).
  \end{equation*}
  Analogously, for any singular $2$-simplex $T_2$ of $X$ we have,
  \begin{equation*}
    \begin{split}
      (\delta_1 \alpha_1)(T_2) =& \alpha_1((\del_2T_2)(t_0,t_1)) =
      \alpha_1(T_2(0,t_0,t_1) - T_2(t_0,0,t_1) + T_2(t_0,t_1,0)) \\=&
      \alpha_1(T_2(0,t_0,t_1)) - \alpha_1(T_2(t_0,0,t_1)) +
      \alpha_2(T_2(t_0,t_1,0)).
    \end{split}
  \end{equation*}
  Notice that in this example we denote the group operations in
  $S_n(X)$, $S^n(X,A)$ and $A$ additively. Later on we may use the
  product notation for the group operation in $A$ and $S^n(X,A)$.
\end{example}

\begin{remark}
  It is easy to check that $\del_{n-1}\circ \del_n = 0$ for all
  $n\geq 2$ and, consequently, that $\delta_{n+1}\circ \delta_n = 0$
  for all $n\geq 0$.
\end{remark}

\begin{definition}\label{def:integration_as_pairing}
  We denote the duality pairing between $S_n(X)$ and $S^n(X,A)$ by
  $\int$. Thus, for any $\alpha_n\in S^n(X,A)$ and $T_n \in S_n(X)$ we
  have
  \begin{equation*}
    \int_{T_n} \alpha_n := \alpha_n(T_n).
  \end{equation*}
\end{definition}

\begin{remark}
  For any $T_{n+1} \in S_{n+1}(X)$ and $\alpha_n \in S^n(X,A)$ we have
  \begin{equation}\label{eq:discrete_stokes_formula}
    \int_{\del_{n+1}T_{n+1}} \alpha_n = \alpha_n(\del_{n+1}T_{n+1}) =
    (\delta_n\alpha_n)(T_{n+1}) = \int_{T_{n+1}} \delta_n\alpha_n,
  \end{equation}
  that is, the ``singular Stokes' Theorem''.
\end{remark}

%%%%%%%%%%%%%%%%%%%%%%%%

\subsection{Small singular chains and cochains}
\label{sec:small_singular_chains_and_cochains}

Next we want to consider a small variation of the previous
construction, with the goal of defining chains and cochains of $X$
that are, in a sense, controlled by an open set.

Let $X$ be a topological space and $U\subset X\times X$ be an open
subset (for the product topology). Then, for any $n\in\N$ we define
\begin{equation}\label{eq:multi_restriction-def}
  U^{(n+1)} := \{(x_0,x_1,\ldots,x_n)\in X^{n+1} :
  (x_j,x_k)\in U \text{ for all } 0\leq j<k\leq n\}.
\end{equation}

\begin{definition}\label{def:U_small_chain}
  For $n\in\N$, a singular $n$-simplex $T_n$ of $X$ is said to be
  \jdef{$U$-small} (\jdef{small} if no confusion arises) if
  $(T_n(e_0),\ldots, T_n(e_n)) \in U^{(n+1)}$. The free abelian group
  generated by the $U$-small singular $n$-simplexes of $X$ is denoted
  by $S_{n,U}(X)$ and its elements will be called \jdef{$U$-small
    singular $n$-chains} of $X$. For completeness, we define
  $S_{0,U}(X):=S_0(X)$.
\end{definition}

The following result is straightforward.
\begin{lemma}
  For each $n\geq 0$, $S_{n,U}(X)\subset S_n(X)$ is a subgroup and,
  for each $n\geq 1$, the map
  $\del_n^U:S_{n,U}(X)\rightarrow S_{n-1,U}(X)$ defined as the
  restriction and co-restriction of $\del_n$ is a homomorphism
  satisfying $\del_n^U\circ \del_{n+1}^U = 0$.
\end{lemma}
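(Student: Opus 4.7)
The plan is to verify the three assertions in turn, noticing that essentially all the work reduces to checking the behavior of the boundary map on vertices of a $U$-small simplex.

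First, the subgroup claim is formal. By construction $S_{n,U}(X)$ is the free abelian group on a subset of the generating set of $S_n(X)$ (namely, those singular $n$-simplexes whose ordered tuple of vertex-images lies in $U^{(n+1)}$), so the inclusion $S_{n,U}(X)\hookrightarrow S_n(X)$ is a subgroup inclusion. The case $n=0$ is covered by the convention $S_{0,U}(X):=S_0(X)$.

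Next I would show that $\partial_n$ maps $S_{n,U}(X)$ into $S_{n-1,U}(X)$; then $\partial_n^U$ is well-defined as the (co)restriction, and it is automatically a group homomorphism since $\partial_n$ is. This is the real content of the lemma. It suffices to check it on generators: given a $U$-small singular $n$-simplex $T_n$, I must verify that each face $T_n\circ\del_n^k$ (for $k=0,\ldots,n$) is a $U$-small singular $(n-1)$-simplex. For $n=1$ there is nothing to check because $S_{0,U}(X)=S_0(X)$ by definition. For $n\geq 2$, using the formulas for $\del_n^k(e_j)$ recalled right after Definition~\ref{def:singular_chains}, the vertex-image tuple of $T_n\circ\del_n^k$ is obtained from $(T_n(e_0),\ldots,T_n(e_n))$ by deleting the $k$-th entry, \ie, it equals $(T_n(e_0),\ldots,T_n(e_{k-1}),T_n(e_{k+1}),\ldots,T_n(e_n))$. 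The $U$-smallness of $T_n$ says every pair $(T_n(e_i),T_n(e_j))$ with $0\leq i<j\leq n$ belongs to $U$; in particular this holds for every pair surviving the deletion, so the face's vertex-image tuple lies in $U^{(n)}$ and $T_n\circ\del_n^k\in S_{n-1,U}(X)$. Extending linearly gives $\del_n(S_{n,U}(X))\subset S_{n-1,U}(X)$.

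Finally, $\del_n^U\circ\del_{n+1}^U=0$ is inherited: the two maps are restrictions of $\del_n$ and $\del_{n+1}$ along the subgroup inclusions, so their composition is the restriction of $\del_n\circ\del_{n+1}$, which already vanishes on all of $S_{n+1}(X)$ by the standard singular-homology identity recalled in the preceding remark.

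The only non-routine point is the middle step, and even that is essentially bookkeeping: once one reads off which vertices survive under $\del_n^k$, the preservation of $U$-smallness is a trivial consequence of the way $U^{(n+1)}$ is defined in~\eqref{eq:multi_restriction-def} as an ``all pairs'' condition, which restricts painlessly to subtuples.
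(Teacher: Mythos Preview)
Your proof is correct; the paper itself omits the argument entirely, simply labeling the lemma as ``straightforward'' and giving no proof. Your write-up is exactly the kind of verification the authors had in mind: the only substantive point is that faces of a $U$-small simplex remain $U$-small, which you handle cleanly via the vertex-deletion description of $\del_n^k$.
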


\begin{definition}\label{def:U_small_cochain}
  Let $A$ be an abelian group and $U\subset X\times X$ an open
  subset. For each $n\in\NZ$ we define the group
  $S^{n,U}(X,A):=\hom(S_{n,U}(X),A)$; its elements are called
  \jdef{$U$-small singular cochains of $X$}. The group homomorphism
  $\delta_n^U:S^{n,U}(X,A)\rightarrow S^{n+1,U}(X,A)$ defined by
  $\delta_n^U \alpha_n := \alpha_n \circ \del_{n+1}^U$ is the
  \jdef{small singular $n$-coboundary} of $X$.
\end{definition}

The following result is straightforward.
\begin{lemma}\label{le:properties_of_small_cochains}
  For each $n\in\NZ$, $S^{n}(X,A)\subset S^{n,U}(X,A)$ is a subgroup
  and the homomorphism $\delta_n^U$ restricted and co-restricted
  appropriately coincides with $\delta_n$. Furthermore,
  $\delta_{n+1}^U\circ \delta_n^U = 0$.
\end{lemma}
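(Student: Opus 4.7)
The plan is to establish the three assertions by formal bookkeeping, relying on the preceding lemma (that $\del_n^U$ is the restriction-corestriction of $\del_n$) together with the contravariant functoriality of $\hom(-,A)$. First I would observe that since $S_{n,U}(X)$ is a subgroup of $S_n(X)$, applying $\hom(-,A)$ yields a restriction homomorphism $r_n: S^n(X,A) \rightarrow S^{n,U}(X,A)$ sending a cochain $\alpha$ to $\alpha|_{S_{n,U}(X)}$. This is manifestly a group homomorphism, and one views $S^n(X,A)$ as a subgroup of $S^{n,U}(X,A)$ through this map, implicitly identifying each cochain with its restriction to $U$-small chains.

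Second, for the compatibility claim, I would verify the naturality identity $\delta_n^U \circ r_n = r_{n+1} \circ \delta_n$. Given $\alpha \in S^n(X,A)$ and a $U$-small simplex $T \in S_{n+1,U}(X)$, the preceding lemma gives $\del_{n+1}^U(T) = \del_{n+1}(T)$ as elements of $S_{n,U}(X) \subset S_n(X)$, so
\begin{equation*}
(\delta_n^U r_n(\alpha))(T) = r_n(\alpha)(\del_{n+1}^U T) = \alpha(\del_{n+1} T) = (\delta_n \alpha)(T) = (r_{n+1}(\delta_n \alpha))(T),
\end{equation*}
which is exactly the assertion that $\delta_n^U$, restricted to $S^n(X,A) \subset S^{n,U}(X,A)$ and co-restricted to $S^{n+1}(X,A) \subset S^{n+1,U}(X,A)$, coincides with $\delta_n$.

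Third, for the identity $\delta_{n+1}^U \circ \delta_n^U = 0$, I would simply unwind the definitions: for any $\alpha \in S^{n,U}(X,A)$ one has $\delta_{n+1}^U \delta_n^U \alpha = \alpha \circ \del_{n+2}^U \circ \del_{n+1}^U$, which vanishes because $\del_{n+1}^U \circ \del_{n+2}^U = 0$ was established in the previous lemma.

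I do not anticipate any serious obstacle: the lemma is purely formal and its proof amounts to checking that restricting to a subcomplex preserves the chain-complex structure. The only place where care is needed is to keep the variance straight—making sure the contravariance of $\hom(-,A)$ turns the subcomplex inclusion $S_{n,U}(X) \hookrightarrow S_n(X)$ into the restriction map $S^n(X,A) \rightarrow S^{n,U}(X,A)$ rather than an extension going the other way—but this is a standard check.
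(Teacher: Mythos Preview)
Your argument is correct and matches the paper, which omits the proof entirely as ``straightforward.'' Two minor remarks: in your third step the composition should read $\alpha \circ \del_{n+1}^U \circ \del_{n+2}^U$ rather than $\alpha \circ \del_{n+2}^U \circ \del_{n+1}^U$ (the latter does not even type-check, and you invoke the correct identity $\del_{n+1}^U \circ \del_{n+2}^U = 0$ immediately afterward, so this is just a slip of the pen); and strictly speaking the restriction map $r_n$ is surjective rather than injective, since $S_{n,U}(X)$ is a direct summand of the free abelian group $S_n(X)$, so the literal containment $S^n(X,A) \subset S^{n,U}(X,A)$ is an abuse of notation already present in the paper's statement, which you have reasonably carried over.
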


Last we have a simple ``change of coefficients'' relation. Let
$f\in\hom(A,A')$, where $A$ and $A'$ are abelian groups. It is easy to
check that the map $f_*:S^{n,U}(X,A)\rightarrow S^{n,U}(X,A')$ defined
by $f_*(\alpha_n):=f\circ \alpha_n$ is a homomorphism satisfying
$f_* \circ \delta_n^{U,A} = \delta_{n+1}^{U,A'}\circ f_*$ (\ie, $f_*$
is a homomorphism of cochain complexes).

As an application of the ideas developed so far, in the next section
we construct small singular cochains associated to a discrete
connection on $\PBmap$, when the structure group of the principal bundle
is abelian.

%%%%%%%%%%%%%%%%%

\subsection{Discrete connections and singular cochains}
\label{sec:discrete_connections_and_singular_cochains}

Let $\DC:\UDt\rightarrow G$ be a discrete connection on $\PBmap$ and
assume that the structure group $G$ of $\PBmap$ is abelian. Let
$T_1\in S_{1,\UDt}(\PBtotal)$ be a $1$-simplex. Then, we define
\begin{equation*}
  [\DC](T_1) := \DC(T_1(1,0),T_1(0,1)) \in G.
\end{equation*}
Being these $1$-simplexes free generators of $S_{1,\UDt}(\PBtotal)$, this
assignment defines a unique element $[\DC]\in S^{1,\UDt}(\PBtotal,G)$.

Observe that, if $\PBtotal$ is connected, the cochain $[\DC]$
determines the function $\DC:\UDt\rightarrow G$. Thus, the discrete
connection form of a discrete connection is a true $1$-form (a
cochain) for this (small) singular cohomology, at least when $G$ is
abelian.  If $\BD:\UDt^{(3)}\rightarrow G$ is the curvature of $\DC$
and $T_2\in S_{2,\UDt}(\PBtotal,G)$ is a $2$-simplex, we define
\begin{equation*}
  [\BD](T_2):=\BD(T_2(1,0,0),T_2(0,1,0),T_2(0,0,1)) \in G.
\end{equation*}
As those simplexes freely generate $S_{2,\UDt}(\PBtotal)$, this formula
determines a unique $[\BD]\in S^{2,\UDt}(\PBtotal,G)$.

\begin{example}\label{ex:DC_mu-cochains}
  In the case of the discrete connection form $\DCl{\mu}$ introduced
  in Example~\ref{ex:DC_mu-DC_form}, if
  $T_1 = \sum_{j=1}^N a_j T_1^j \in S_{1,\PBtotal\times
    \PBtotal}(\R_{>0}\times U(1)) = S_{1}(\R_{>0}\times U(1))$, using
  multiplicative notation, we have
  \begin{equation*}
    [\DCl{\mu}](T_1) = \prod_{j=1}^N
    (\DCl{\mu}(\underbrace{T_1^j(1,0)}_{=:(r_0^j,h_0^j)},
    \underbrace{T_1^j(0,1)}_{=:(r_1^j,h_1^j)})^{a_j}
    = \exp\left(i \sum_{j=1}^N a_j (r_1^j-r_0^j)^\mu\right)
    \prod_{j=1}^N \left(\frac{h_1^j}{h_0^j}\right)^{a_j} .
  \end{equation*}
  Similarly, if
  $T_2 = \sum_{j=1}^N a_j T_2^j \in S_{2,\PBtotal\times
    \PBtotal}(\R_{>0}\times U(1)) = S_{2}(\R_{>0}\times U(1))$, we
  have
  \begin{equation*}
    \begin{split}
      [\BDl{\mu}](T_2) =& \prod_{j=1}^N
      (\BDl{\mu}(\underbrace{T_2^j(1,0,0)}_{=:(r_0^j,h_0^j)},
      \underbrace{T_2^j(0,1,0)}_{=:(r_1^j,h_1^j)},
      \underbrace{T_2^j(0,0,1)}_{=:(r_2^j,h_2^j)})^{a_j} \\=&
      \exp\left( i \sum_{j=1}^N
        a_j\left(-(r_2^j-r_0^j)^\mu+(r_2^j-r_1^j)^\mu +
          (r_1^j-r_0^j)^\mu \right)\right).
    \end{split}
  \end{equation*}
\end{example}

\begin{prop}\label{prop:dDC=BD-G}
  For $[\DC]\in S^{1,\UDt}(\PBtotal,G)$ and
  $[\BD]\in S^{2,\UDt}(\PBtotal,G)$ as above, we have
  $[\BD] = \delta_1^\UDt [\DC]$ and $\delta_2^\UDt [\BD]=e$.
\end{prop}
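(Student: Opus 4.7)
Both identities are equalities in $S^{\bullet,\UDt}(\PBtotal,G)$, so it suffices to test them on the free generators of the relevant small chain groups. For the first identity I would fix an arbitrary $T_2\in S_{2,\UDt}(\PBtotal)$ and set $q_j:=T_2(e_j)$ for $j=0,1,2$. By the definition of $\UDt$-smallness, $(q_0,q_1,q_2)\in \UDt^{(3)}$, which in particular ensures that each face $T_2\circ \del_2^k$ is itself a $\UDt$-small $1$-simplex, so the computation below stays inside the small complex.

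Using the explicit formulas for $\del_2^k$ recorded in Example~\ref{ex:singular_12_chains}, the vertices of the three faces are $(q_1,q_2)$, $(q_0,q_2)$, and $(q_0,q_1)$, respectively. Evaluating $[\DC]$ on each face and writing $G$ multiplicatively,
\begin{equation*}
(\delta_1^{\UDt}[\DC])(T_2) = [\DC](T_2\circ \del_2^0)\cdot [\DC](T_2\circ \del_2^1)^{-1}\cdot [\DC](T_2\circ \del_2^2) = \DC(q_1,q_2)\,\DC(q_0,q_2)^{-1}\,\DC(q_0,q_1).
\end{equation*}
The only non-formal step of the proof is to invoke the hypothesis that $G$ is abelian to commute the three factors and match the definition $\BD(q_0,q_1,q_2)=\DC(q_0,q_2)^{-1}\DC(q_1,q_2)\DC(q_0,q_1)$. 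This yields $(\delta_1^{\UDt}[\DC])(T_2)=\BD(q_0,q_1,q_2)=[\BD](T_2)$, and since $T_2$ was arbitrary, $[\BD]=\delta_1^{\UDt}[\DC]$.

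For the second identity I would not compute anything: combining the identity just proved with the general coboundary relation $\delta_{n+1}^{U}\circ \delta_n^U=0$ from Lemma~\ref{le:properties_of_small_cochains} gives
\begin{equation*}
\delta_2^{\UDt}[\BD] = \delta_2^{\UDt}\bigl(\delta_1^{\UDt}[\DC]\bigr) = e.
\end{equation*}
The main (and essentially only) conceptual point is the use of commutativity of $G$ in the first part; without it, the alternating product from the coboundary formula would not line up with the ordering of factors in the definition of $\BD$, which is exactly why the statement is restricted to the abelian case.
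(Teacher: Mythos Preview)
Your proof is correct and follows essentially the same approach as the paper: evaluate both sides on an arbitrary $\UDt$-small $2$-simplex, expand the coboundary using the face maps, and compare with the definition of $\BD$, then derive the second identity from $\delta_2^{\UDt}\circ\delta_1^{\UDt}=e$. Your write-up is in fact slightly more explicit than the paper's in isolating the one place where commutativity of $G$ is used.
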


\begin{proof}
  The second assertion follows from the first and the fact that
  $\delta_2^\UDt \circ \delta_1^\UDt = e$. As the $2$-simplexes
  generate $S_{2,\UDt}(\PBtotal)$, it suffices to check the first
  assertion of the statement on them to conclude its validity in
  general. Let $T_2$ be one such $2$-simplex. Then,
  \begin{equation*}
    \begin{split}
      [\BD](T_2) =& \BD(T_2(1,0,0),T_2(0,1,0),T_2(0,0,1)) \\=&
      \DC(T_2(1,0,0),T_2(0,0,1))^{-1}
      \DC(T_2(0,1,0),T_2(0,0,1))\DC(T_2(1,0,0),T_2(0,1,0)).
    \end{split}
  \end{equation*}
  On the other hand, converting to multiplicative notation the
  computations of Example~\ref{ex:singular_12_cochains},
  \begin{equation*}
    \begin{split}
      (\delta_1^\UDt [\DC])(&T_2) = [\DC](T_2(0,t_0,t_1))
      ([\DC](T_2(t_0,0,t_1)))^{-1}[\DC](T_2(t_0,t_1,0)) \\=&
      \DC(T_2(0,1,0),T_2(0,0,1)) \DC(T_2(1,0,0),T_2(0,0,1))^{-1}
      \DC(T_2(1,0,0),T_2(0,1,0)).
    \end{split}
  \end{equation*}
  The result follows by comparing the two formulas.
\end{proof}

\begin{remark}
  The expression $\delta_2^\UDt [\BD]=e$ may be interpreted as a
  discrete version of Bianchi's Identity.
\end{remark}

A nice immediate consequence of the formalism developed is the
following result.

\begin{corollary}\label{cor:DC_exact_imp_flat}
  Let $\DC:\UDt\rightarrow G$ be a discrete connection on
  $\PBmap:\PBtotal\rightarrow \PBbase$ with $\PBtotal$ connected. If
  there is $\alpha_0\in S^0(\PBtotal)$ such that
  $[\DC] = \delta_0^\UDt \alpha_0$ then $\DC$ is flat, \ie, $\BD=e$.
\end{corollary}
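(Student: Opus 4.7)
The strategy is essentially cohomological: push the hypothesis through the coboundary operators and then recover pointwise information from the resulting cochain identity.

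First, I would chain together the formal properties of the small singular complex. By hypothesis, $[\DC] = \delta_0^\UDt \alpha_0$, so applying $\delta_1^\UDt$ and invoking Lemma~\ref{le:properties_of_small_cochains} (namely $\delta_1^\UDt \circ \delta_0^\UDt = 0$, which in multiplicative notation reads $e$) gives $\delta_1^\UDt [\DC] = e$. Proposition~\ref{prop:dDC=BD-G} then identifies the left-hand side as $[\BD]$, so the cochain $[\BD] \in S^{2,\UDt}(\PBtotal,G)$ is trivial, i.e.\ $[\BD](T_2) = e$ for every $\UDt$-small singular $2$-simplex $T_2$.

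Next I would upgrade this cochain-level statement to the pointwise statement $\BD = e$ on all of $\UDt^{(3)}$. Fix an arbitrary triple $(q_0,q_1,q_2) \in \UDt^{(3)}$. By definition of $[\BD]$, if I can exhibit a continuous simplex $T_2:\Delta_2\to \PBtotal$ with $T_2(e_j) = q_j$ for $j=0,1,2$, then such a $T_2$ is automatically $\UDt$-small (smallness depends only on the vertex values) and
\begin{equation*}
  \BD(q_0,q_1,q_2) = [\BD](T_2) = e.
\end{equation*}
To build $T_2$ I use that $\PBtotal$, being a connected smooth manifold, is path-connected. Concretely, concatenating a path from $q_0$ to $q_1$ with one from $q_1$ to $q_2$ yields $\gamma:[0,1]\to \PBtotal$ with $\gamma(0)=q_0$, $\gamma(1/2)=q_1$, $\gamma(1)=q_2$; composing with $\phi:\Delta_2\to [0,1]$, $\phi(t_0,t_1,t_2) := \tfrac12 t_1 + t_2$, gives the desired simplex $T_2 = \gamma\circ \phi$.

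The only subtle point is the last step, where one must ensure the simplex used to evaluate $[\BD]$ really is $\UDt$-small; but this is automatic because smallness is defined purely in terms of the vertex values $(T_2(e_0),T_2(e_1),T_2(e_2))$, which we have prescribed to lie in $\UDt^{(3)}$. Thus no further compatibility between $T_2$ and $\UDt$ needs to be verified, and the argument concludes cleanly.
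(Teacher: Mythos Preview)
Your proof is correct and follows essentially the same route as the paper's: first use $\delta_1^\UDt\circ\delta_0^\UDt = e$ together with Proposition~\ref{prop:dDC=BD-G} to conclude $[\BD]=e$, then build for each $(q_0,q_1,q_2)\in\UDt^{(3)}$ a $\UDt$-small $2$-simplex with those vertices via a path $\gamma$ and the map $(t_0,t_1,t_2)\mapsto \tfrac{1}{2}t_1+t_2$. Your explicit remark that smallness depends only on the vertex values is a helpful clarification, but the argument is otherwise identical to the paper's.
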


\begin{proof}
  As $[\DC] = \delta_0^\UDt \alpha_0$, by
  Proposition~\ref{prop:dDC=BD-G} and
  Lemma~\ref{le:properties_of_small_cochains}, we have
  $[\BD] = \delta_1^\UDt [\DC] = (\delta_1^\UDt \circ \delta_0^\UDt)
  \alpha_0 = e$. Hence, for any $T_2\in S_{2,\UDt}(\PBtotal)$,
  \begin{equation}\label{eq:DC_exact_imp_flat-BD=0}
    e = [\BD](T_2) = \BD(T_2(1,0,0),T_2(0,1,0),T_2(0,0,1)).
  \end{equation}
  For any $(q_0,q_1,q_2)\in\UDt^{(3)}$, as $\PBtotal$ is connected
  (hence, path connected), there is a continuous map
  $\gamma:[0,1]\rightarrow \PBtotal$ such that $\gamma(0)=q_0$,
  $\gamma(\frac{1}{2})=q_1$ and $\gamma(1)=q_2$. Let
  $T_2:\Delta_2\rightarrow \PBtotal$ be defined by
  $T_2(t_0,t_1,t_2):=\gamma(\frac{1}{2}t_1+t_2)$; it is immediate that
  $T_2\in S_{2,\UDt}(\PBtotal)$ and it satisfies $T_2(e_j)=q_j$ for
  $j=0,1,2$. All together, using~\eqref{eq:DC_exact_imp_flat-BD=0},
  \begin{equation*}
    \BD(q_0,q_1,q_2) = \BD(T_2(1,0,0),T_2(0,1,0),T_2(0,0,1)) = e.
  \end{equation*}
\end{proof}

%%%%%%%%%%%%%%%%%%%%%%%%%

\subsection{The local version}
\label{sec:the_local_version}

Just as we described in
Section~\ref{sec:discrete_connections_and_singular_cochains} how to
view discrete connection forms and the corresponding curvature of a
principal bundle as (small) singular cochains, it is possible to do
the same for the local expressions of those objects, as we discuss
below.

Let $V\subset \PBbase$ be an open subset, $s:V\rightarrow \PBtotal$ be
a section of $\PBmap$ and $\DC:\UDt\rightarrow G$ be a discrete
connection form on $\PBmap$. Let $\DCp{s}:\VDt''\rightarrow G$ be the
corresponding local expression of $\DC$ with respect to
$s$~\eqref{eq:DC-local_expression-def}. For each $1$-simplex
$T_1\in S_{1,\VDt''}(\PBbase)$ we define
\begin{equation*}
  [\DCp{s}](T_1) := \DCp{s}(T_1(1,0),T_1(0,1)) \in G.
\end{equation*}
Being those simplexes generators of $S_{1,\VDt''}(\PBbase)$, this
assignment determines a unique element
$[\DCp{s}]\in S^{1,\VDt''}(\PBbase,G)$.

Similarly, let $\BDp{s}:{\VDt''}^{(3)}\rightarrow G$ be the local
expression of the curvature of
$\DC$~\eqref{eq:BD-local_expression-def} and $T_2\in S_{2,\VDt''}(\PBbase)$
be a $2$-simplex. We define
\begin{equation*}
  [\BDp{s}](T_2) := \BDp{s}(T_2(1,0,0),T_2(0,1,0),T_2(0,0,1)) \in G.
\end{equation*}
Just as before, as the $2$-simplexes generate $S_{2,\VDt''}(\PBbase)$ this
last expression defines a unique element
$[\BDp{s}]\in S^{2,\VDt''}(\PBbase,G)$.

\begin{example}\label{ex:DC_mu-local_cochains}
  In the case of the discrete connection form $\DCl{\mu}$ introduced
  in Example~\ref{ex:DC_mu-DC_form} together with the (global)
  trivialization of Example~\ref{ex:DC_mu-DC_form-local}, if
  $T_1 = \sum_{j=1}^N a_j T_1^j \in S_{1,\PBbase\times
    \PBbase}(\R_{>0}) = S_{1}(\R_{>0})$, we have
  \begin{equation*}
    [\DClp{\mu}{s}](T_1) = \prod_{j=1}^N
    (\DClp{\mu}{s}(\underbrace{T_1^j(1,0)}_{=:r_0^j},
    \underbrace{T_1^j(0,1)}_{=:r_1^j})^{a_j}
    = \exp\left(i \sum_{j=1}^N a_j (r_1^j-r_0^j)^\mu\right).
  \end{equation*}
  Similarly, if
  $T_2 = \sum_{j=1}^N a_j T_2^j \in S_{2,\PBbase\times \PBbase}(\R_{>0}) =
  S_{2}(\R_{>0})$, we have
  \begin{equation*}
    \begin{split}
      [\BDlp{\mu}{s}](T_2) =& \prod_{j=1}^N
      (\BDlp{\mu}{s}(\underbrace{T_2^j(1,0,0)}_{=:r_0^j},
      \underbrace{T_2^j(0,1,0)}_{=:r_1^j},
      \underbrace{T_2^j(0,0,1)}_{=:r_2^j})^{a_j} \\=& \exp\left( i
        \sum_{j=1}^N a_j\left(-(r_2^j-r_0^j)^\mu+(r_2^j-r_1^j)^\mu +
          (r_1^j-r_0^j)^\mu \right)\right).
    \end{split}
  \end{equation*}
\end{example}

It is easy to prove the following local analogue of
Lemma~\ref{le:dDC=BD-G}.

\begin{lemma}\label{le:dDC=BD-G}
  For $[\DCp{s}]\in S^{1,\VDt''}(\PBbase,G)$ and
  $[\BDp{s}]\in S^{2,\UDt''}(\PBbase,G)$ as above, we have
  $[\BDp{s}] = \delta_1^{\VDt''} [\DCp{s}]$ and
  $\delta_2^{\VDt''} [\BDp{s}]=e$.
\end{lemma}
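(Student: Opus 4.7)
My plan is to mirror the proof of Proposition~\ref{prop:dDC=BD-G} almost verbatim, replacing $\DC$, $\BD$ and $\UDt$ by their local analogues $\DCp{s}$, $\BDp{s}$ and $\VDt''$. First I would observe that the second identity $\delta_2^{\VDt''}[\BDp{s}]=e$ is a consequence of the first one: indeed, once $[\BDp{s}]=\delta_1^{\VDt''}[\DCp{s}]$ is established, applying Lemma~\ref{le:properties_of_small_cochains} yields $\delta_2^{\VDt''}[\BDp{s}] = (\delta_2^{\VDt''}\circ \delta_1^{\VDt''})[\DCp{s}] = e$. So the whole content of the lemma is the equality $[\BDp{s}]=\delta_1^{\VDt''}[\DCp{s}]$.

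Since the $\VDt''$-small $2$-simplexes freely generate $S_{2,\VDt''}(\PBbase)$, it is enough to check that the two cochains agree on any such $T_2$. On the one hand, by the definition of $[\BDp{s}]$ and the first formula of Lemma~\ref{le:properties_of_BD_s}, I would compute
\begin{equation*}
  [\BDp{s}](T_2) = \DCp{s}(T_2(e_0),T_2(e_2))^{-1}\,\DCp{s}(T_2(e_1),T_2(e_2))\,\DCp{s}(T_2(e_0),T_2(e_1)).
\end{equation*}
On the other hand, converting the additive computation of Example~\ref{ex:singular_12_cochains} to multiplicative notation (which is legitimate because $G$ is abelian) gives
\begin{equation*}
  (\delta_1^{\VDt''}[\DCp{s}])(T_2) = [\DCp{s}](T_2\circ \del_2^0)\,[\DCp{s}](T_2\circ \del_2^1)^{-1}\,[\DCp{s}](T_2\circ \del_2^2),
\end{equation*}
and evaluating each factor with $\del_2^k(e_j)$ as described right after Definition~\ref{def:singular_chains} produces exactly the three factors $\DCp{s}(T_2(e_1),T_2(e_2))$, $\DCp{s}(T_2(e_0),T_2(e_2))^{-1}$ and $\DCp{s}(T_2(e_0),T_2(e_1))$. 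Commutativity of $G$ then matches the two expressions.

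The only small point to keep track of is that all evaluations of $\DCp{s}$ and $\BDp{s}$ appearing above are well defined: $T_2$ being $\VDt''$-small means $(T_2(e_j),T_2(e_k))\in \VDt''$ for all $0\le j<k\le 2$, which is exactly the condition $(T_2(e_0),T_2(e_1),T_2(e_2))\in {\VDt''}^{(3)}$ needed to apply Lemma~\ref{le:properties_of_BD_s}, and which also ensures that each of the three $1$-simplexes $T_2\circ \del_2^k$ lies in $S_{1,\VDt''}(\PBbase)$. Since this bookkeeping is straightforward, I do not expect any real obstacle in the argument.
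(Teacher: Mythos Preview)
Your proposal is correct and is exactly the approach the paper intends: the paper does not write out a separate proof for this lemma but merely remarks that it is the local analogue of Proposition~\ref{prop:dDC=BD-G}, so mirroring that argument with $\DCp{s}$, $\BDp{s}$ and $\VDt''$ in place of $\DC$, $\BD$ and $\UDt$ (and invoking Lemma~\ref{le:properties_of_BD_s} for the factorization of $\BDp{s}$) is precisely what is expected.
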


%%%%%%%%%%%%%%%%%%%%%%%%%%%%%%%%%%%%%%%%%%%%%%%

\section{Holonomy around a loop}
\label{sec:holonomy_around_a_loop}

In this section we find an explicit formula for the discrete holonomy
phase around a loop that is contained in an open set trivializing
$\PBmap$. We still work in the case where $G$, the structure group of
$\PBmap$, is abelian.

\begin{remark}
  By Proposition~\ref{def:holonomy_around_a_loop} the different values
  of the discrete holonomy phase around a loop, starting at different
  points of the corresponding fiber are conjugated elements of
  $G$. When $G$ is abelian, these elements are all the same, so that
  the discrete phase around a loop only depends on the loop and we
  denote it by $\HPd(m_\cdot)$.
\end{remark}

\begin{lemma}\label{le:paths_and_simplexes}
  Let $\DC:\UDt\rightarrow G$ be a discrete connection on $\PBmap$,
  whose structure group $G$ is abelian. Let $V\subset \PBbase$ be a
  connected open subset and $s:\PBbase\rightarrow \PBtotal$ be a local
  section of $\PBmap$. For any discrete loop in $\PBbase$,
  $m_\cdot\in\Omega_{N,\VDt''}(\conj{m})$, we have that
  \begin{enumerate}
  \item\label{it:paths_and_simplexes-existence} there exists
    $\ti{m} = \sum_{k=1}^{N} T_1^k \in S_{1,\VDt''}(\PBbase)$ such that
    $T_1^k(e_0) = m_{k-1}$ and $T_1^k(e_1)=m_{k}$ for $k=1,\ldots,N$.
  \item\label{it:paths_and_simplexes-integral} Then, for any such
    $\ti{m}$,
    $\prod_{k=1}^N \DCp{s}(m_{k-1},m_{k})^{-1} = \left(\int_{\ti{m}}
      [\DCp{s}]\right)^{-1}$.
  \end{enumerate}
\end{lemma}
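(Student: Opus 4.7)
The plan is to treat the two parts separately; both reduce to unpacking the definitions from Section~\ref{sec:the_local_version}. For part~\eqref{it:paths_and_simplexes-existence}, I would first invoke that $V\subset \PBbase$ is a connected open subset of a smooth manifold, hence path connected. For each $k=1,\ldots,N$, the hypothesis $m_\cdot\in \Omega_{N,\VDt''}(\conj{m})$ yields $(m_{k-1},m_k)\in \VDt''\subset V\times V$, so there is a continuous path in $V$ from $m_{k-1}$ to $m_{k}$; reparametrizing it over $\Delta_1$ produces a singular $1$-simplex $T_1^k:\Delta_1\rightarrow \PBbase$ with $T_1^k(e_0)=m_{k-1}$ and $T_1^k(e_1)=m_k$. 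By Definition~\ref{def:U_small_chain}, $\VDt''$-smallness of a $1$-simplex is a condition only on the pair $(T_1^k(e_0),T_1^k(e_1))$, which holds by construction; hence $T_1^k\in S_{1,\VDt''}(\PBbase)$ and the chain $\ti{m}:=\sum_{k=1}^{N}T_1^k$ answers part~\eqref{it:paths_and_simplexes-existence}.

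For part~\eqref{it:paths_and_simplexes-integral}, I would pair $\ti{m}$ with $[\DCp{s}]$ via Definition~\ref{def:integration_as_pairing} and use that $[\DCp{s}]\in S^{1,\VDt''}(\PBbase,G)$ is a group homomorphism into the abelian group $G$. Writing $G$ multiplicatively,
\begin{equation*}
  \int_{\ti{m}}[\DCp{s}] \;=\; [\DCp{s}]\Big(\sum_{k=1}^{N} T_1^k\Big) \;=\; \prod_{k=1}^{N}[\DCp{s}](T_1^k).
\end{equation*}
The definition of $[\DCp{s}]$ recalled in Section~\ref{sec:the_local_version} gives $[\DCp{s}](T_1^k)=\DCp{s}(T_1^k(1,0),T_1^k(0,1))=\DCp{s}(m_{k-1},m_k)$, so inverting in $G$ yields
\begin{equation*}
  \prod_{k=1}^{N}\DCp{s}(m_{k-1},m_k)^{-1} \;=\; \Big(\int_{\ti{m}}[\DCp{s}]\Big)^{-1},
\end{equation*}
as required. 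Because the right-hand side depends only on the vertices of each $T_1^k$, the identity automatically holds for \emph{any} admissible $\ti{m}$, which is the content of the ``for any such $\ti{m}$'' clause.

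I do not expect any real obstacle here: the existence claim is purely topological (path connectedness of the connected open set $V$ inside the manifold $\PBbase$), and the integral computation is essentially tautological once one recognizes that the $\VDt''$-small $1$-cochain $[\DCp{s}]$ reads a singular $1$-simplex solely through its two endpoints, exactly as the local discrete connection form $\DCp{s}$ consumes pairs of points in $V$. The only point worth highlighting in the write-up is this endpoint-only dependence, since it justifies the universality of the formula over the choice of connecting simplexes.
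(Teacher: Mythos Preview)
Your proposal is correct and follows essentially the same approach as the paper: construct each $T_1^k$ from a continuous path in the (path-)connected open set $V$ joining $m_{k-1}$ to $m_k$, verify $\VDt''$-smallness via the endpoint condition, and then compute $\int_{\ti{m}}[\DCp{s}]$ by expanding the homomorphism $[\DCp{s}]$ on the generators and inverting. Your explicit remark that $[\DCp{s}]$ reads a $1$-simplex only through its endpoints, which justifies the ``for any such $\ti{m}$'' clause, is a nice clarification that the paper leaves implicit.
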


\begin{proof}
  Being $V$ connected, for each $k=1,\ldots,N$ there is a continuous
  path $\gamma_{k-1,k}:[0,1]\rightarrow V$ such that
  $\gamma_{k-1,k}(0) = m_{k-1}$ and $\gamma_{k-1,k}(1) =
  m_{k}$. Define $T_1^k:\Delta_1\rightarrow \PBbase$ by
  $T_1^k((1-t) e_0 + t e_1) := \gamma_{k-1,k}(t)$ for $t\in [0,1]$. As
  $(T_1^k(e_i),T_1^k(e_j)) \in \VDt''$ for all $i,j=0,1$, we see that
  $\ti{m} := \sum_{k}^{N} T_1^k \in S_{1,\VDt''}(\PBbase)$, proving
  point~\eqref{it:paths_and_simplexes-existence}.  Then, by definition
  of $\int$,
  \begin{equation*}
    \begin{split}
      \int_{\ti{m}} [\DCp{s}] =& \DCp{s}(\ti{m}) =
      \DCp{s}\left(\sum_{k=1}^{N} T_1^k\right) = \prod_{k=1}^{N}
      \DCp{s}(T_1^k) \\=& \prod_{k=1}^{N}
      \DCp{s}(T_1^k(e_0),T_1^k(e_1)) = \prod_{k=1}^{N}
      \DCp{s}(m_{k-1},m_{k}),
    \end{split}
  \end{equation*}
  that, on inversion, leads to
  point~\eqref{it:paths_and_simplexes-integral}.
\end{proof}

We say that the cochain $\ti{m}\in S_{1,\VDt''}(\PBbase)$ satisfying
point~\eqref{it:paths_and_simplexes-existence} of
Lemma~\ref{le:paths_and_simplexes} \jdef{interpolates} the discrete
path $m_\cdot$

\begin{theorem}\label{th:phases_and_loops-group}
  Let $\DC:\UDt\rightarrow G$ be a discrete connection on the
  principal $G$-bundle $\PBmap:\PBtotal\rightarrow \PBbase$ with $G$
  abelian, $V\subset \PBbase$ a connected open subset and
  $s:V\rightarrow \PBtotal$ a local section of $\PBmap$. For any
  $m_\cdot\in \Omega_N(\conj{m})$ such that
  $(m_0,m_k,m_{k+1}) \in {\VDt''}^{(3)}$ for all $k=0,\ldots,N-1$, we
  have
  \begin{equation}\label{eq:phases_and_loops-group-DC}
    \HPd(m_\cdot) = \left(\int_{\ti{m}}[\DCp{s}]\right)^{-1},
  \end{equation}
  where $\ti{m}\in S_{1,\VDt''}(\PBbase)$ is any small singular chain
  interpolating the discrete path $m_\cdot$, in the sense of
  Lemma~\ref{le:paths_and_simplexes}. If, in addition, there is
  $\ti{\sigma}\in S_{2,\VDt''}(\PBbase)$ so that
  $\del_2^{\VDt''}(\ti{\sigma}) = \ti{m}$, then
  \begin{equation}\label{eq:phases_and_loops-group-BD}
    \HPd(m_\cdot) = \left(\int_{\ti{\sigma}}[\BDp{s}]\right)^{-1}.
  \end{equation}
\end{theorem}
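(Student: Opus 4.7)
The plan is to reduce~\eqref{eq:phases_and_loops-group-DC} to the product formula from Proposition~\ref{prop:product_of_DC_and_product_of_BD}\eqref{it:product_of_DC_and_product_of_BD-section}, and then to derive~\eqref{eq:phases_and_loops-group-BD} from the first identity by invoking the ``singular Stokes'' formula~\eqref{eq:discrete_stokes_formula} together with the cocycle identity $[\BDp{s}]=\delta_1^{\VDt''}[\DCp{s}]$ of Lemma~\ref{le:dDC=BD-G}.

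First I would observe that the hypothesis $(m_0,m_k,m_{k+1})\in {\VDt''}^{(3)}$ implies in particular that $(m_{k-1},m_k)\in \VDt''$ for every $k=1,\ldots,N$, so $m_\cdot\in \Omega_{N,\VDt''}(\conj{m})$ and Proposition~\ref{prop:product_of_DC_and_product_of_BD}\eqref{it:product_of_DC_and_product_of_BD-section} applies. Fixing any $\conj{q}\in \PBtotal|_{\conj{m}}$ and writing $\conj{q}=\varphi_s(\conj{m},g_0)$, that proposition gives
\begin{equation*}
  \HPd(m_\cdot,\conj{q}) = g_0\left(\prod_{k=1}^N \DCp{s}(m_{k-1},m_k)^{-1}\right) g_0^{-1}.
\end{equation*}
Since $G$ is abelian the conjugation by $g_0$ is trivial, so $\HPd(m_\cdot,\conj{q})$ is independent of $\conj{q}$, consistently with the remark preceding the statement that allows the notation $\HPd(m_\cdot)$. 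Applying Lemma~\ref{le:paths_and_simplexes}\eqref{it:paths_and_simplexes-integral} to any interpolating chain $\ti{m}$ converts the resulting product into $(\int_{\ti{m}}[\DCp{s}])^{-1}$, proving~\eqref{eq:phases_and_loops-group-DC}.

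For the curvature version~\eqref{eq:phases_and_loops-group-BD}, the idea is to combine the first identity with Stokes. Assuming $\del_2^{\VDt''}(\ti{\sigma})=\ti{m}$, the small-complex version of~\eqref{eq:discrete_stokes_formula}, together with Lemma~\ref{le:dDC=BD-G}, yields
\begin{equation*}
  \int_{\ti{m}}[\DCp{s}] \;=\; \int_{\del_2^{\VDt''}\ti{\sigma}}[\DCp{s}] \;=\; \int_{\ti{\sigma}}\delta_1^{\VDt''}[\DCp{s}] \;=\; \int_{\ti{\sigma}}[\BDp{s}].
\end{equation*}
Substituting this into~\eqref{eq:phases_and_loops-group-DC} gives~\eqref{eq:phases_and_loops-group-BD}.

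I do not anticipate a substantive obstacle: the machinery of Section~\ref{sec:singular_cohomology_and_adaptations} and Proposition~\ref{prop:product_of_DC_and_product_of_BD} carry essentially all the weight. The two small points to verify carefully are that an interpolating chain $\ti{m}$ lying in $S_{1,\VDt''}(\PBbase)$ actually exists, which is exactly Lemma~\ref{le:paths_and_simplexes}\eqref{it:paths_and_simplexes-existence} and uses the connectedness of $V$, and that the abelianness of $G$ is genuinely needed: it removes the conjugation by $g_0$ and makes the unordered product $\prod_{k}\DCp{s}(m_{k-1},m_k)$ coincide with $\int_{\ti{m}}[\DCp{s}]$ (the latter being defined via a homomorphism out of a free \emph{abelian} group).
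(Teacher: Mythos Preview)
Your proposal is correct and follows essentially the same route as the paper's own proof: use Proposition~\ref{prop:product_of_DC_and_product_of_BD}\eqref{it:product_of_DC_and_product_of_BD-section} together with the abelianness of $G$ and Lemma~\ref{le:paths_and_simplexes}\eqref{it:paths_and_simplexes-integral} for~\eqref{eq:phases_and_loops-group-DC}, then Lemma~\ref{le:dDC=BD-G} and the small Stokes identity for~\eqref{eq:phases_and_loops-group-BD}. Your added remarks on why connectedness of $V$ and abelianness of $G$ are genuinely used are accurate and do not depart from the paper's argument.
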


\begin{proof}
  Identity~\eqref{eq:phases_and_loops-group-DC} follows from the first
  equality in~\eqref{eq:product_of_DC_and_product_of_BD-phase_furmula}
  taking into account that $G$ is abelian and, then,
  point~\eqref{it:paths_and_simplexes-integral} of
  Lemma~\ref{le:paths_and_simplexes}. On the other hand, if
  $\ti{\sigma}\in S_{2,\VDt''}(\PBbase)$ satisfies
  $\del_2^{\VDt''}(\ti{\sigma}) = \ti{m}$, using
  Proposition~\ref{le:dDC=BD-G} and (a ``small version''
  of)~\eqref{eq:discrete_stokes_formula}, we have
  \begin{equation*}
    \int_{\ti{\sigma}} [\BDp{s}] =
    \int_{\ti{\sigma}}\delta_1^{\VDt''}[\DCp{s}]
    = \int_{\del_2^{\VDt''}\ti{\sigma}} [\DCp{s}] =
    \int_{\ti{m}}[\DCp{s}],
  \end{equation*}
  and~\eqref{eq:phases_and_loops-group-BD} follows
  from~\eqref{eq:phases_and_loops-group-DC}.
\end{proof}

\begin{example}\label{ex:DC_mu-DC_form-integrals_group}
  In the case of the discrete connection form $\DCl{\mu}$ introduced
  in Example~\ref{ex:DC_mu-DC_form} together with the (global)
  trivialization of Example~\ref{ex:DC_mu-DC_form-local}, if
  $r_\cdot\in \Omega_{N,\VDt''}(\conj{r}) = \Omega_N(\conj{r})$, it
  automatically satisfies that
  $(r_0,r_k,r_{k+1}) \in {\VDt''}^{(3)} = \R_{>0}^3$ for all $k$, so
  that we can apply Theorem~\ref{th:phases_and_loops-group} to compute
  the discrete holonomy phase $\HPd(r_\cdot)$. A $1$-chain in
  $\R_{>0}$ that interpolates $r_\cdot$ is
  $\ti{r} := \sum_{j=1}^N T_1^j$ for
  $T_1^j(t_0e_0+t_1e_1) := t_0 r_{j-1}+t_1 r_j $, so that
  $\ti{r}\in S_1(\R_{>0})$.  Thus,
  using~\eqref{eq:phases_and_loops-group-DC} and
  Example~\ref{ex:DC_mu-local_cochains}, we have
  \begin{equation}\label{eq:DC_mu-DC_form-integrals_group-DC}
    \HPd(r_\cdot) = \left( \int_{\ti{r}} [\DCl{\mu}] \right)^{-1} =
    ([\DCl{\mu}](\ti{r}))^{-1} =
    \exp\left(-i \sum_{j=1}^N (r_j-r_{j-1})^\mu\right).
  \end{equation}
  We construct $\ti{\sigma}\in S_2(\R_{>0})$ as follows. First, for
  each $j=1,\ldots,N-1$, define
  \begin{equation*}
    \gamma_j(t):=
    \begin{cases}
      3t r_j + (1-3t) r_0,\stext{ if } 0\leq t\leq\frac{1}{3},\\
      (3t-1) r_{j+1} + (2-3t) r_j, \stext{ if }
      \frac{1}{3}\leq t\leq \frac{2}{3},\\
      (3t-2) r_0 + (3-3t) r_{j+1}, \stext{ if } \frac{2}{3}\leq t \leq
      1,
    \end{cases}
  \end{equation*}
  and
  $T_2^j(t_0e_0+t_1e_1+t_2e_2) :=
  \gamma_j(\frac{1}{3}t_1+\frac{2}{3}t_2)$. Finally,
  $\ti{\sigma}:=\sum_{j=1}^{N-1} T_2^j$. By construction,
  $\ti{\sigma}\in S_{2}(\R_{>0})$ and it is not hard to check that
  $\del_2\ti{\sigma} = \ti{r}$. Then,
  using~\eqref{eq:phases_and_loops-group-BD}
  and~\eqref{eq:DC_mu-curvature-local_expression},
  \begin{equation*}
    \begin{split}
      \HPd(m_\cdot) =& \left(\int_{\ti{\sigma}}[\BDlp{\mu}{s}]\right)^{-1} =
      \left( [\BDlp{\mu}{s}](\ti{\sigma}) \right)^{-1} \\=&
      \prod_{j=1}^{N-1}
      (\BDlp{\mu}{s}(T_2^j(1,0,0),T_2^j(0,1,0),T_2^j(0,0,1)))^{-1}
      = \prod_{j=1}^{N-1} (\BDlp{\mu}{s}(r_0,r_j,r_{j+1}))^{-1}
      \\=& \prod_{j=1}^{N-1} \exp(-i(-(r_{j+1}-r_0)^\mu +
      (r_{j+1}-r_j)^\mu + (r_j-r_0)^\mu)) \\=&
      \exp\left(-i\sum_{j=1}^{N-1}(-(r_{j+1}-r_0)^\mu +
        (r_{j+1}-r_j)^\mu + (r_j-r_0)^\mu)\right) \\=&
      \exp\left(-i\sum_{j=0}^{N-1} (r_{j+1}-r_j)^\mu\right),
    \end{split}
  \end{equation*}
  matching the previous computation of $\HPd(m_\cdot)$.
\end{example}

\begin{remark}
  For (continuous) connections on a principal $G$-bundle the set of
  all possible holonomy phases starting at a given point is known as
  the holonomy group of the connection and it encodes interesting
  information about the connection and the bundle (see, for
  instance,~\cite{bo:kobayashi_nomizu-foundations-v1}). A similar set
  can be constructed in the discrete setting:
  $\Hold(\conj{q}) := \{\HPd(m_\cdot,\conj{q}) \in G : m_\cdot\in
  \Omega_{\UDt''}(\PBmap(\conj{q}))\}$. As a subset of the group $G$,
  $\Hold(\conj{q})$ is, in general, a submonoid (\ie, it contains the
  identity element and is closed under products). If, in addition, the
  discrete connection is \jdef{symmetric} (essentially,
  $\DC(q_0,q_1) = \DC(q_1,q_0)^{-1}$ for all $(q_0,q_1)\in \UDt$),
  $\Hold(\conj{q})$ is a subgroup of $G$
  (see~\cite{ar:fernandez_zuccali-holonomy_of_discrete_connections}
  for more details on discrete holonomy).

  For the discrete connection $\DCl{\mu}$ introduced in
  Example~\ref{ex:DC_mu-DC_form}, it is easy to see,
  using~\eqref{eq:DC_mu-DC_form-integrals_group-DC}, that
  $\Hold(\conj{q}) = U(1)$ for all $\mu>1$ while
  $\Hold(\conj{q}) = \{1\}$ if $\mu=1$.
\end{remark}

Notice that identities~\eqref{eq:phases_and_loops-group-DC}
and~\eqref{eq:phases_and_loops-group-BD} are discrete analogues
of~\eqref{eq:phase_in_terms_of_connection-cont}
and~\eqref{eq:phase_in_terms_of_curvature-cont}, although not
``exponentiated''. In the next section we make a few formal changes to
consider singular cochains with values in $\jgg:=\lie{G}$, so that we
can obtain exact discrete analogues
of~\eqref{eq:phase_in_terms_of_connection-cont}
and~\eqref{eq:phase_in_terms_of_curvature-cont}.

%%%%%%%%%%%%%%%%%%%%%%%%%%%%%%%%%%%%%%%%%%%%%%%

\section{Forms with values in the Lie algebra}
\label{sec:forms_with_values_in_the_lie_algebra}

The goal of this section is to define functions $\LDCp{s}$ and
$\LBDp{s}$ with values in $\jgg$ so that $\DCp{s} = \exp_G(\LDCp{s})$
and $\BDp{s} = \exp_G(\LBDp{s})$. Then, use them to define singular
cochains that will enable us to prove discrete analogues
of~\eqref{eq:phase_in_terms_of_connection-cont}
and~\eqref{eq:phase_in_terms_of_curvature-cont}.

%%%%%%%%%%%%%%%%%%%%%%%%%%%%

\subsection{Logarithms of $\DCp{s}$ and $\BDp{s}$}
\label{sec:logarithms_of_DCs_and_BDs}

The main tool that we need is the following classical result.

\begin{theorem}\label{thm:exp_is_diffeo}
  Let $G$ be a Lie group with Lie algebra $\jgg$. Then, the
  exponential map $\exp_G:\jgg\rightarrow G$ is a diffeomorphism
  between open neighborhoods $U_0$ and $V_e$ of $0\in \jgg$ and
  $e\in G$. In addition, when $G$ is abelian, $\exp$ is a homomorphism
  of groups, considering $\jgg$ as a Lie group with the operation
  given by the addition.
\end{theorem}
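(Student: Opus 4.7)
The plan is to prove the two assertions separately, each by appealing to standard Lie-theoretic machinery.

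For the first assertion, I would start by recalling that $\exp_G:\jgg\rightarrow G$ is defined by $\exp_G(X):=\gamma_X(1)$, where $\gamma_X$ is the unique integral curve of the left-invariant vector field associated to $X$ with $\gamma_X(0)=e$; this makes $\exp_G$ smooth (it is the time-$1$ map of a smooth flow depending smoothly on initial data). The key computation is to identify the differential at the origin: using $T_0\jgg\cong \jgg$ and $T_eG\cong \jgg$, and the fact that $\frac{d}{dt}\big|_{t=0}\exp_G(tX) = X$ by definition of one-parameter subgroup, one obtains $(d\exp_G)_0 = \mathrm{id}_{\jgg}$. Then the Inverse Function Theorem produces open neighborhoods $U_0$ of $0\in \jgg$ and $V_e$ of $e\in G$ such that $\exp_G|_{U_0}:U_0\rightarrow V_e$ is a diffeomorphism.

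For the second assertion I would use the one-parameter subgroup characterization together with commutativity. Fix $X,Y\in\jgg$ and consider the smooth curve $\sigma:\R\rightarrow G$ given by $\sigma(t):=\exp_G(tX)\exp_G(tY)$. Since $G$ is abelian, the product of any two one-parameter subgroups commutes, and a direct check shows $\sigma(s+t) = \sigma(s)\sigma(t)$, so $\sigma$ is itself a one-parameter subgroup. Its initial velocity is $\sigma'(0) = X + Y$, and by uniqueness of one-parameter subgroups with given initial velocity we conclude $\sigma(t) = \exp_G(t(X+Y))$. Evaluating at $t=1$ gives $\exp_G(X)\exp_G(Y) = \exp_G(X+Y)$, as desired.

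The most delicate step is the commutativity argument showing $\sigma(s+t)=\sigma(s)\sigma(t)$: this is where abelianness is used essentially, as the usual non-abelian Baker--Campbell--Hausdorff corrections vanish precisely because $[X,Y]=0$ for all $X,Y\in \jgg$ when $G$ is abelian (which follows from differentiating the commutativity relation). Once this is in hand, the rest of the argument is a direct consequence of the uniqueness of integral curves of smooth vector fields. No further estimates or constructions are needed.
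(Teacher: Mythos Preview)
Your proposal is correct. The paper's own proof consists entirely of a citation to Varadarajan's textbook (Theorem~2.10.1 and Corollary~2.13.3), so there is no argument to compare against; your direct proof via $(d\exp_G)_0=\mathrm{id}_{\jgg}$ plus the Inverse Function Theorem, together with the one-parameter-subgroup uniqueness argument for the homomorphism property, is precisely the standard route one would find in such a reference.
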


\begin{proof}
  See Theorem 2.10.1 and Corollary 2.13.3
  of~\cite{bo:varadarajan-lie_groups_lie_algebras_and_their_representations}.
\end{proof}

The following result uses Theorem~\ref{thm:exp_is_diffeo} to obtain
some additional open sets.

\begin{lemma}\label{le:exp_is_diffeo_and_more}
  With the same notation of Theorem~\ref{thm:exp_is_diffeo}, there are
  open neighborhoods $U_0'\subset U_0$ and $V_e'\subset V_e$ of
  $0\in \jgg$ and $e\in G$ such that
  $\exp_G|_{U_0'}:U_0'\rightarrow V_e'$ is a diffeomorphism and the
  following conditions hold.
  \begin{enumerate}
  \item \label{it:exp_is_diffeo_and_more-inverse} $U_0'$ and $V_e'$ are
    invariant under $a\mapsto -a$ and $g\mapsto g^{-1}$ respectively.
  \item \label{it:exp_is_diffeo_and_more-product} For any
    $a_0,a_1,a_2\in U_0'$, $a_0+a_1+a_2\in U_0$ and, when $G$ is
    abelian, for any $g_0,g_1,g_2\in V_e'$, we have
    $g_0g_1g_2\in V_e$.
  \end{enumerate}
\end{lemma}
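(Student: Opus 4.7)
The plan is to shrink $U_0$ and $V_e$ twice: once to achieve the inversion symmetry in condition~\eqref{it:exp_is_diffeo_and_more-inverse}, and once to achieve the triple-sum/triple-product containments in condition~\eqref{it:exp_is_diffeo_and_more-product}. Throughout, the standard Lie-theoretic identity $\exp_G(-a) = \exp_G(a)^{-1}$ (which holds in any Lie group, not only the abelian case) is the bridge between the symmetries on the algebra and group sides.

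First, I would use continuity to produce the triple-sum/triple-product neighborhoods. Since addition $+:\jgg\times\jgg\times\jgg\to\jgg$ is continuous and sends $(0,0,0)$ to $0\in U_0$, there is an open neighborhood $A$ of $0$ in $\jgg$ with $A+A+A\subset U_0$; shrinking further if necessary we can assume $A\subset U_0$. Analogously, continuity of multiplication $G\times G\times G\to G$ at $(e,e,e)$ yields an open neighborhood $B$ of $e$ in $G$ with $B\cdot B\cdot B\subset V_e$, and we may assume $B\subset V_e$. Then I would symmetrize by setting $A':= A\cap(-A)$ and $B':= B\cap B^{-1}$: both are open, contain the identity element, are invariant under negation/inversion (the maps $a\mapsto -a$ and $g\mapsto g^{-1}$ are homeomorphisms), and still satisfy $A'+A'+A'\subset U_0$ and $B'\cdot B'\cdot B'\subset V_e$.

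Next I would define
\begin{equation*}
  U_0' := A' \cap \exp_G^{-1}(B') \stext{ and } V_e' := \exp_G(U_0'),
\end{equation*}
and check all the required properties. Since $\exp_G^{-1}(B')$ is open (by continuity of $\exp_G$) and contains $0$, the set $U_0'$ is an open neighborhood of $0$ contained in $U_0$; consequently $\exp_G|_{U_0'}:U_0'\to V_e'$ is a diffeomorphism, being the restriction of the diffeomorphism $\exp_G|_{U_0}:U_0\to V_e$ to an open subset. The invariance of $U_0'$ under $a\mapsto -a$ follows because $A'$ is invariant by construction and, using $\exp_G(-a)=\exp_G(a)^{-1}$ together with the invariance of $B'$ under inversion, the preimage $\exp_G^{-1}(B')$ is also invariant under negation. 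The invariance of $V_e'$ under $g\mapsto g^{-1}$ then follows by applying $\exp_G$ and using the same identity.

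Finally, for condition~\eqref{it:exp_is_diffeo_and_more-product}: since $U_0'\subset A'$, the inclusion $A'+A'+A'\subset U_0$ gives the triple-sum statement for arbitrary $G$, and since $V_e'=\exp_G(U_0')\subset \exp_G(\exp_G^{-1}(B')) = B'$, the inclusion $B'\cdot B'\cdot B'\subset V_e$ gives the triple-product statement (the abelian hypothesis is not actually needed for this containment, but it is harmless to state it under that assumption, which is what the paper uses in the sequel). No step should present a real obstacle; the only mildly subtle point is verifying the negation invariance of $\exp_G^{-1}(B')$, which is where the identity $\exp_G(-a)=\exp_G(a)^{-1}$ is used.
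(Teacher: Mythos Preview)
Your proof is correct and is exactly the kind of ``standard application of topological properties, mostly continuity of the group and algebra operations'' that the paper's one-line proof alludes to; you have simply supplied the details the paper omits. One cosmetic point: the equality $\exp_G(\exp_G^{-1}(B')) = B'$ is justified here because $B'\subset V_e=\exp_G(U_0)$ lies in the image of $\exp_G$, but in any case only the containment $\subset$ is needed.
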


\begin{proof}
  It is a standard application of topological properties, mostly
  continuity of the group and algebra operations.
\end{proof}

In what follows we assume that the structure group $G$ of $\PBmap$ is
abelian and that $\DC:\UDt\rightarrow G$ is a discrete connection form
on $\PBmap$. Also, we fix an open subset $V\subset \PBbase$ and a
section $s:V\rightarrow \PBtotal$ of $\PBmap$.

\begin{definition}\label{def:adt}
  Let $\WDt'':= (\DCp{s})^{-1}(V_e') \subset \PBbase\times \PBbase$ and
  $\LDCp{s}:\WDt''\rightarrow U_0'$ so that
  $\exp_G\circ \LDCp{s} = \DCp{s}$. We call $\LDCp{s}$ the
  \jdef{logarithm of the local expression of $\DC$}.
\end{definition}

\begin{lemma}\label{le:adt_is_well_defined}
  In the context of Definition~\ref{def:adt}, $\WDt''\subset\VDt''$ is
  open, and $\LDCp{s}$ is well defined and smooth.
\end{lemma}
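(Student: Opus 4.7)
The plan is to unwind the definitions: once we have a smooth local inverse for $\exp_G$ near the identity (provided by Theorem~\ref{thm:exp_is_diffeo} and Lemma~\ref{le:exp_is_diffeo_and_more}), the lemma is essentially a formal consequence of the fact that $\DCp{s}$ is smooth and $V_e'$ is open. There is nothing delicate to navigate; the only thing to be careful about is the distinction between the domains $\VDt''$ (where $\DCp{s}$ lives) and $\WDt''$ (where we want $\LDCp{s}$ to live).

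First I would check that $\WDt''\subset \VDt''$: this is immediate from Definition~\ref{def:adt} since $\WDt'' = (\DCp{s})^{-1}(V_e')$ and $\DCp{s}$ has domain $\VDt''$. For openness, recall from Lemma~\ref{le:properties_of_DC_s} that $\VDt''$ is open in $\PBbase\times\PBbase$ and that $\DCp{s}:\VDt''\rightarrow G$ is smooth, hence continuous. Since $V_e'\subset G$ is open by Lemma~\ref{le:exp_is_diffeo_and_more}, the preimage $(\DCp{s})^{-1}(V_e')$ is open in $\VDt''$, and therefore open in $\PBbase\times \PBbase$.

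For well-definedness and smoothness of $\LDCp{s}$, the key observation is that $\exp_G|_{U_0'}:U_0'\rightarrow V_e'$ is a diffeomorphism by Theorem~\ref{thm:exp_is_diffeo} and Lemma~\ref{le:exp_is_diffeo_and_more}, so its inverse $\log_G:V_e'\rightarrow U_0'$ is smooth. By construction of $\WDt''$, for every $(m_0,m_1)\in \WDt''$ we have $\DCp{s}(m_0,m_1)\in V_e'$, so the composition $\LDCp{s} := \log_G\circ (\DCp{s}|_{\WDt''})$ makes sense, takes values in $U_0'$, and satisfies $\exp_G\circ \LDCp{s} = \DCp{s}|_{\WDt''}$, as required in Definition~\ref{def:adt}. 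Uniqueness of the lift is guaranteed by the injectivity of $\exp_G|_{U_0'}$, and smoothness of $\LDCp{s}$ follows since it is a composition of two smooth maps.

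There is no real obstacle here; the only conceptual point worth flagging is that we are \emph{not} claiming a global logarithm for $\DCp{s}$ on all of $\VDt''$, but only on the possibly smaller open set $\WDt''$ where the values of $\DCp{s}$ land in the trivializing neighborhood $V_e'$ of $e$. All later constructions involving $\LDCp{s}$ will therefore have to respect this restricted domain.
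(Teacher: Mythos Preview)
Your proof is correct and follows essentially the same approach as the paper: both argue openness of $\WDt''$ from continuity of $\DCp{s}$ and openness of $V_e'$, and both obtain $\LDCp{s}$ as the composition of $\DCp{s}|_{\WDt''}$ with the smooth inverse of $\exp_G|_{U_0'}$. Your write-up is slightly more expansive (e.g.\ explicitly naming the inverse $\log_G$ and flagging the domain restriction), but the argument is the same.
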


\begin{proof}
  As $\DCp{s}:\VDt''\rightarrow G$ is smooth, and $V_e'\subset G$ is
  open, $\WDt''\subset \VDt''$ is open. For any $(m_0,m_1)\in \WDt''$,
  we have $\DCp{s}(m_0,m_1) \in V_e'$ and, as
  $\exp_G|_{U_0'}:U_0'\rightarrow V_e'$ is a diffeomorphism, there is
  a unique $\LDCp{s}(m_0,m_1)\in U_0'$ such that
  $\exp_G(\LDCp{s}(m_0,m_1)) = \DCp{s}(m_0,m_1)$. The smoothness of
  $\LDCp{s}$ follows from that of $\DCp{s}$ and the fact that
  $\exp_G|_{U_0'}$ is a diffeomorphism.
\end{proof}

In the same spirit, we have the following notion.

\begin{definition}\label{def:bdt}
  Let $\ti{\WDt}'':= (\BDp{s})^{-1}(V_e) \subset \PBbase^3$ and
  $\LBDp{s}:\ti{\WDt}''\rightarrow U_0$ so that
  $\exp_G\circ \LBDp{s} = \BDp{s}$. We call $\LBDp{s}$ the
  \jdef{logarithm of the local expression of $\BD$}.
\end{definition}

\begin{lemma}\label{le:bdt_is_well_defined}
  In the context of Definition~\ref{def:bdt}, we have
  \begin{enumerate}
  \item \label{it:bdt_is_well_defined-open}
    $\ti{\WDt}''\subset {\VDt''}^{(3)}$ is open and
    ${\WDt''}^{(3)}\subset \ti{\WDt}''$, where $\VDt''$ is introduced
    in Definition~\ref{def:DC_local_expression}, $\WDt''$ in
    Definition~\ref{def:adt} and ${\VDt''}^{(3)}, {\WDt''}^{(3)}$
    follow~\eqref{eq:multi_restriction-def}.
  \item \label{it:bdt_is_well_defined-well} $\LBDp{s}$ is well defined
    and smooth.
  \item \label{it:bdt_is_well_defined-relation_ldc}
    $\LBDp{s}(m_0,m_1,m_2) =
    -\LDCp{s}(m_0,m_2)+\LDCp{s}(m_1,m_2)+\LDCp{s}(m_0,m_1)$ for all
    $(m_0,m_1,m_2)\in {\WDt''}^{(3)}$.
  \end{enumerate}
\end{lemma}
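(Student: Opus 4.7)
The plan is to chase the definitions and use the three Lemmas already in place: Theorem~\ref{thm:exp_is_diffeo}, Lemma~\ref{le:exp_is_diffeo_and_more}, and Lemma~\ref{le:properties_of_BD_s}. The argument decomposes naturally along the three items.

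\medskip

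For item~\eqref{it:bdt_is_well_defined-open}, the inclusion $\ti{\WDt}''\subset {\VDt''}^{(3)}$ is automatic from the domain of $\BDp{s}$, and openness is just continuity of $\BDp{s}$ applied to the open set $V_e$. For the inclusion ${\WDt''}^{(3)}\subset \ti{\WDt}''$, take $(m_0,m_1,m_2)\in {\WDt''}^{(3)}$. By definition, each pair $(m_j,m_k)$ lies in $\WDt''$, so $\DCp{s}(m_j,m_k)\in V_e'$. Using Lemma~\ref{le:properties_of_BD_s}, I would rewrite
\begin{equation*}
  \BDp{s}(m_0,m_1,m_2) = \DCp{s}(m_0,m_2)^{-1}\,\DCp{s}(m_1,m_2)\,\DCp{s}(m_0,m_1).
\end{equation*}
Now part~\eqref{it:exp_is_diffeo_and_more-inverse} of Lemma~\ref{le:exp_is_diffeo_and_more} gives $\DCp{s}(m_0,m_2)^{-1}\in V_e'$, and part~\eqref{it:exp_is_diffeo_and_more-product} (using that $G$ is abelian) then places the above three-factor product in $V_e$. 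Thus $(m_0,m_1,m_2)\in \ti{\WDt}''$.

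\medskip

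Item~\eqref{it:bdt_is_well_defined-well} is immediate from Theorem~\ref{thm:exp_is_diffeo}: since $\exp_G|_{U_0}:U_0\to V_e$ is a diffeomorphism and $\BDp{s}$ takes values in $V_e$ on $\ti{\WDt}''$, the map
\begin{equation*}
  \LBDp{s} := (\exp_G|_{U_0})^{-1}\circ \BDp{s}|_{\ti{\WDt}''}
\end{equation*}
is the unique $U_0$-valued lift of $\BDp{s}$, and it is smooth as a composition of smooth maps.

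\medskip

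For item~\eqref{it:bdt_is_well_defined-relation_ldc}, fix $(m_0,m_1,m_2)\in{\WDt''}^{(3)}$ and set $\xi := -\LDCp{s}(m_0,m_2)+\LDCp{s}(m_1,m_2)+\LDCp{s}(m_0,m_1)$. The three summands lie in $U_0'$ (the last two obviously; the first by item~\eqref{it:exp_is_diffeo_and_more-inverse} of Lemma~\ref{le:exp_is_diffeo_and_more}), so item~\eqref{it:exp_is_diffeo_and_more-product} of the same lemma gives $\xi\in U_0$. Applying $\exp_G$, which is a group homomorphism from $(\jgg,+)$ to $G$ in the abelian case (Theorem~\ref{thm:exp_is_diffeo}), and using $\exp_G\circ \LDCp{s}=\DCp{s}$,
\begin{equation*}
  \exp_G(\xi) = \DCp{s}(m_0,m_2)^{-1}\,\DCp{s}(m_1,m_2)\,\DCp{s}(m_0,m_1) = \BDp{s}(m_0,m_1,m_2) = \exp_G(\LBDp{s}(m_0,m_1,m_2)).
\end{equation*}
Both $\xi$ and $\LBDp{s}(m_0,m_1,m_2)$ lie in $U_0$, where $\exp_G$ is injective, so they coincide, proving the identity.

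\medskip

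The only mild technical point is making sure the three terms in the definition of $\xi$ really can be added inside $U_0$ rather than in a larger set, which is precisely what the symmetric neighborhoods $U_0'\subset U_0$ in Lemma~\ref{le:exp_is_diffeo_and_more} were designed to guarantee; once one has that, the rest is a bookkeeping of inclusions and one application of the homomorphism property of $\exp_G$.
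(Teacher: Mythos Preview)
Your proof is correct and follows essentially the same route as the paper's own argument: the same decomposition into the three items, the same use of Lemma~\ref{le:properties_of_BD_s} to expand $\BDp{s}$, of Lemma~\ref{le:exp_is_diffeo_and_more} to keep the relevant products and sums inside $V_e$ and $U_0$, and of the injectivity of $\exp_G$ on $U_0$ to conclude item~\eqref{it:bdt_is_well_defined-relation_ldc}. If anything, you are slightly more explicit in invoking part~\eqref{it:exp_is_diffeo_and_more-inverse} of Lemma~\ref{le:exp_is_diffeo_and_more} to place $\DCp{s}(m_0,m_2)^{-1}$ in $V_e'$ and $-\LDCp{s}(m_0,m_2)$ in $U_0'$, which the paper leaves implicit.
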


\begin{proof}
  As $\BDp{s}:{\VDt''}^{(3)}\rightarrow G$,
  $\ti{\WDt}''= \BDp{s}^{-1}(V_e')\subset {\VDt''}^{(3)}$ and its
  openness follows from that of $V_e'$ and the continuity of
  $\BDp{s}$. On the other hand, for $(m_0,m_1,m_2)\in {\WDt''}^{(3)}$,
  we have $(m_j,m_k)\in \WDt''$ for all $j,k=0,1,2$ and, so,
  $\DCp{s}(m_j,m_k)\in V_e'$ for all $j,k=0,1,2$. Then, by
  Lemma~\ref{le:properties_of_BD_s} and
  point~\eqref{it:exp_is_diffeo_and_more-product} of
  Lemma~\ref{le:exp_is_diffeo_and_more}, we have
  \begin{equation*}
    \BDp{s}(m_0,m_1,m_2) = \DCp{s}(m_0,m_2)^{-1} \DCp{s}(m_1,m_2)
    \DCp{s}(m_0,m_1) \in V_e.
  \end{equation*}
  Thus, $(m_0,m_1,m_2)\in(\BDp{s})^{-1}(V_e)$, proving
  point~\eqref{it:bdt_is_well_defined-open}. As
  $\BDp{s}(\ti{\WDt}'')\subset V_e$ and
  $\exp_G|_{U_0}:U_0\rightarrow V_e$ is a diffeomorphism,
  $\LBDp{s} = (\exp_G|_{U_0})^{-1}\circ \BDp{s}|_{\ti{\WDt}''}$ is
  well defined and smooth, proving
  point~\eqref{it:bdt_is_well_defined-well}. In order to prove
  point~\eqref{it:bdt_is_well_defined-relation_ldc} we observe that,
  for $(m_0,m_1,m_2)\in {\WDt''}^{(3)}$,
  \begin{equation}\label{eq:bdt_is_well_defined-relation_ldc-exp_id}
    \begin{split}
      \exp_G(\LBDp{s}(m_0,m_1,m_2)) =& \BDp{s}(m_0,m_1,m_2) =
      \DCp{s}(m_0,m_2)^{-1} \DCp{s}(m_1,m_2) \DCp{s}(m_0,m_1) \\=&
      (\exp_G(\LDCp{s}(m_0,m_2)))^{-1} \exp_G(\LDCp{s}(m_1,m_2))
      \exp_G(\LDCp{s}(m_0,m_1)) \\=&
      \exp_G(-\LDCp{s}(m_0,m_2)+\LDCp{s}(m_1,m_2) + \LDCp{s}(m_0,m_1)),
    \end{split}
  \end{equation}
  where the second equality is by Lemma~\ref{le:properties_of_BD_s},
  the third is because $(m_j,m_k)\in \WDt''$ for all $j,k$ and the
  last by Theorem~\ref{thm:exp_is_diffeo} and the commutativity of
  $G$. Then, as $\LDCp{s}(m_j,m_k)\in U_0'$, by
  Lemma~\ref{le:exp_is_diffeo_and_more},
  $-\LDCp{s}(m_0,m_2)+\LDCp{s}(m_1,m_2) + \LDCp{s}(m_0,m_1) \in
  U_0$. As we also have that $\LBDp{s}(m_0,m_1,m_2)\in U_0$ and we
  know that $\exp_G$ is injective over $U_0$,
  point~\eqref{it:bdt_is_well_defined-relation_ldc} of the statement
  now follows from~\eqref{eq:bdt_is_well_defined-relation_ldc-exp_id}.
\end{proof}

\begin{example}\label{ex:DC_mu-logs}
  In the context of Example~\ref{ex:DC_mu-DC_form}, we have that
  \begin{equation*}
    \jgg := \lie{U(1)}= i\R \stext{ and } \exp_{U(1)}(i\zeta) =
    \exp(i\zeta).
  \end{equation*}
  If we take $U_0:=i(-\pi,\pi)$ and $V_e:=U(1)\SM\{-1\}$, it is an
  elementary fact that $\exp_{U(1)}|_{U_0}:U_0\rightarrow V_e$ is a
  diffeomorphism; the corresponding inverse map is $\Log(z)$, (the
  appropriate restriction of) the principal branch of the complex
  logarithm. The sets $U_0'$ and $V_e'$ constructed in the proof of
  Lemma~\ref{le:exp_is_diffeo_and_more} are
  $U_0'=i(-\frac{\pi}{3},\frac{\pi}{3})$ and
  $V_e'=\{z\in U(1) : \abs{\Arg(z)}\leq \frac{\pi}{3}\}$ (where
  $\Arg(z)$ is the argument of $z$ that lies in
  $(-\pi,\pi]$). Following Definition~\ref{def:adt}, the domain of
  $\LDClp{\mu}{s}$ is
  $\WDt'' = (\DCp{s})^{-1}(V_e') =
  \{(r_0,r_1)\in\R_{>0}^2:\abs{\jmod_{2\pi}((r_1-r_0)^\mu)}<\frac{\pi}{3}\}$,
  where $\jmod_{2\pi}(r)$ is the unique real number in $(-\pi,\pi]$
  congruent to $r$ modulo $2\pi$; then, for $(r_0,r_1)\in \WDt''$,
  \begin{equation*}
    \LDClp{\mu}{s}(r_0,r_1) = \Log(\DClp{\mu}{s}(r_0,r_1)) =
    \jmod_{2\pi}((r_1-r_0)^\mu). 
  \end{equation*}
  Similarly, following Definition~\ref{def:bdt}, we have
  $\ti{\WDt}'':= (\BDp{s})^{-1}(V_e) =
  \{(r_0,r_1,r_2)\in\R_{>0}^3:\jmod_{2\pi}(-(r_2-r_0)^\mu +
  (r_1-r_0)^\mu + (r_2-r_0)^\mu) \neq \pi\}$ and, for all
  $(r_0,r_1,r_2)\in \ti{\WDt}''$,
  \begin{equation*}
    \LBDlp{\mu}{s}(r_0,r_1,r_2) = \Log(\BDp{s}(r_0,r_1,r_2)) =
    \jmod_{2\pi}(-(r_2-r_0)^\mu + (r_1-r_0)^\mu + (r_2-r_0)^\mu).
  \end{equation*}
\end{example}

%%%%%%%%%%%%%%%%%%%%%%%%%%%%%%%

\subsection{Small singular cochains associated to $\LDCp{s}$ and
  $\LBDp{s}$}
\label{sec:small_singular_cochains_associated_to_LDCs_and_LBDs}

Now we reproduce the arguments of
Section~\ref{sec:discrete_connections_and_singular_cochains} to obtain
small singular cochains with values in $\jgg$ associated to $\LDCp{s}$
and $\LBDp{s}$.

Recall the open subset $\WDt''\subset \PBbase\times \PBbase$
introduced in Definition~\ref{def:adt}. In what follows, we work with
the $\WDt''$-small singular cochain complex
$(S_{n,\WDt''}(\PBbase,\jgg),\del_n^{\WDt''})$.

\begin{definition}
  With the notation as above, for any $1$-simplex
  $T_1\in S_{1,\WDt''}(\PBbase,\jgg)$ we define
  \begin{equation*}
    [\LDCp{s}](T_1) := \LDCp{s}(T_1(1,0),T_1(0,1)). 
  \end{equation*}
  Also, for any $2$-simplex $T_2\in S_{2,\WDt''}(\PBbase,\jgg)$ we define
  \begin{equation*}
    [\LBDp{s}](T_2) := \LBDp{s}(T_2(1,0,0),T_2(0,1,0),T_2(0,0,1)). 
  \end{equation*}
  As the $1$ and $2$ simplexes freely generate $S_{1,\WDt''}(\PBbase)$
  and $S_{2,\WDt''}(\PBbase)$ respectively, the previous formulas
  uniquely define $[\LDCp{s}]\in S^{1,\WDt''}(\PBbase,\jgg)$ and
  $[\LBDp{s}]\in S^{2,\WDt''}(\PBbase,\jgg)$.
\end{definition}

\begin{prop}
  With the same notation as above,
  $\delta_1^{\WDt''}[\LDCp{s}] = [\LBDp{s}]$ and
  $\delta_2^{\WDt''}[\LBDp{s}] = 0$.
\end{prop}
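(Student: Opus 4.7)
The plan is to reduce the statement, just as in Proposition~\ref{prop:dDC=BD-G}, to a pointwise identity on $2$-simplexes and then invoke point~\eqref{it:bdt_is_well_defined-relation_ldc} of Lemma~\ref{le:bdt_is_well_defined}. The second equality $\delta_2^{\WDt''}[\LBDp{s}]=0$ is then free: once we know $[\LBDp{s}]=\delta_1^{\WDt''}[\LDCp{s}]$, it follows from $\delta_2^{\WDt''}\circ\delta_1^{\WDt''}=0$ (Lemma~\ref{le:properties_of_small_cochains} applied to the coefficient group $\jgg$). So the whole content is in the first equality.

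To prove $\delta_1^{\WDt''}[\LDCp{s}] = [\LBDp{s}]$, since the $\WDt''$-small $2$-simplexes freely generate $S_{2,\WDt''}(\PBbase)$, it suffices to check the identity on one such $T_2$. By definition of the $\WDt''$-small complex, a $2$-simplex $T_2\in S_{2,\WDt''}(\PBbase)$ has its vertices satisfying $(T_2(e_0),T_2(e_1),T_2(e_2))\in {\WDt''}^{(3)}$, which is precisely the hypothesis needed to apply point~\eqref{it:bdt_is_well_defined-relation_ldc} of Lemma~\ref{le:bdt_is_well_defined}. Using the computation of $\delta_1$ on $2$-simplexes from Example~\ref{ex:singular_12_cochains} (but now in additive notation, since the coefficient group is the abelian Lie algebra $\jgg$), one obtains
\begin{equation*}
  (\delta_1^{\WDt''}[\LDCp{s}])(T_2) =
  \LDCp{s}(T_2(0,1,0),T_2(0,0,1)) - \LDCp{s}(T_2(1,0,0),T_2(0,0,1))
  + \LDCp{s}(T_2(1,0,0),T_2(0,1,0)).
\end{equation*}
By point~\eqref{it:bdt_is_well_defined-relation_ldc} of Lemma~\ref{le:bdt_is_well_defined} applied to $(m_0,m_1,m_2):=(T_2(1,0,0),T_2(0,1,0),T_2(0,0,1))\in {\WDt''}^{(3)}$, this equals $\LBDp{s}(m_0,m_1,m_2)=[\LBDp{s}](T_2)$, as required.

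The only subtle point is making sure we are entitled to invoke point~\eqref{it:bdt_is_well_defined-relation_ldc}: that relation is proved only over ${\WDt''}^{(3)}$, and this is exactly where the choice to work with the $\WDt''$-small complex (rather than $\VDt''$-small or $\ti\WDt''$-small) pays off. I do not expect any genuine obstacle here; the proof is a direct transcription of Proposition~\ref{prop:dDC=BD-G} with the group operation in $G$ replaced by the addition in $\jgg$ and with Proposition~\ref{prop:dDC=BD-G}'s appeal to Lemma~\ref{le:properties_of_BD_s} replaced by Lemma~\ref{le:bdt_is_well_defined}\eqref{it:bdt_is_well_defined-relation_ldc}.
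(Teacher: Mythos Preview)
Your proof is correct and follows essentially the same route as the paper: reduce to a single $\WDt''$-small $2$-simplex, unfold $\delta_1^{\WDt''}$ via Example~\ref{ex:singular_12_cochains}, and match the result with $\LBDp{s}$ using Lemma~\ref{le:bdt_is_well_defined}\eqref{it:bdt_is_well_defined-relation_ldc}; the second identity then follows from Lemma~\ref{le:properties_of_small_cochains}. Your explicit observation that $T_2\in S_{2,\WDt''}(\PBbase)$ forces $(T_2(e_0),T_2(e_1),T_2(e_2))\in{\WDt''}^{(3)}$, and that this is exactly the domain on which Lemma~\ref{le:bdt_is_well_defined}\eqref{it:bdt_is_well_defined-relation_ldc} is available, is a nice clarification of a point the paper leaves implicit.
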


\begin{proof}
  As the $2$-simplexes generate $S_{2,\WDt''}(\PBbase)$, it suffices
  to verify that
  $(\delta_1^{\WDt''}[\LDCp{s}])(T_2) = [\LBDp{s}](T_2)$ for every
  such $2$-simplex $T_2$. After unraveling the definitions, this
  identity follows from
  point~\eqref{it:bdt_is_well_defined-relation_ldc} of
  Lemma~\ref{le:bdt_is_well_defined} and (a ``small version'' of) the
  formula in Example~\ref{ex:singular_12_cochains}. Thus, the first
  identity holds. The second one is a consequence of the first and of
  Lemma~\ref{le:properties_of_small_cochains}.
\end{proof}

\begin{prop}\label{prop:small_cochains_and_exp_G}
  The cochains $[\DCp{s}] \in S^{1,\VDt''}(X,G)$ and
  $[\BDp{s}] \in S^{2,\VDt''}(X,G)$ are naturally elements of
  $S^{1,\WDt''}(X,G)$ and $S^{2,\WDt''}(X,G)$ respectively. As such, we have
  \begin{equation*}
    [\DCp{s}] = (\exp_G)_* [\LDCp{s}] \stext{ and }
    [\BDp{s}] = (\exp_G)_* [\LBDp{s}],
  \end{equation*}
  where $(exp_G)_*$ is the homomorphism of cochain complexes defined
  in Section~\ref{sec:small_singular_chains_and_cochains} and induced
  by $\exp_G\in\hom(\jgg,G)$.
\end{prop}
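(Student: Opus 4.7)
The plan is to unpack the definitions on both sides of each claimed equality and match them on generating simplexes, after first verifying the natural restriction of cochains makes sense.

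First I would handle the claim that $[\DCp{s}]$ and $[\BDp{s}]$ live naturally in the smaller complexes. By Lemma~\ref{le:adt_is_well_defined}, $\WDt''\subset \VDt''$, so every $\WDt''$-small $1$-simplex of $\PBbase$ is automatically $\VDt''$-small, giving an inclusion $S_{1,\WDt''}(\PBbase)\subset S_{1,\VDt''}(\PBbase)$ and, by restriction, a homomorphism $S^{1,\VDt''}(\PBbase,G)\to S^{1,\WDt''}(\PBbase,G)$. For the $2$-cochains we use point~(\ref{it:bdt_is_well_defined-open}) of Lemma~\ref{le:bdt_is_well_defined}: since ${\WDt''}^{(3)}\subset \ti{\WDt}''\subset {\VDt''}^{(3)}$, every $\WDt''$-small $2$-simplex is $\VDt''$-small, so $S_{2,\WDt''}(\PBbase)\subset S_{2,\VDt''}(\PBbase)$ and again we restrict $[\BDp{s}]$ to obtain an element of $S^{2,\WDt''}(\PBbase,G)$.

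Next I would prove the two equalities on generators, exploiting the fact that the $\WDt''$-small $1$- and $2$-simplexes freely generate $S_{1,\WDt''}(\PBbase)$ and $S_{2,\WDt''}(\PBbase)$. For a $\WDt''$-small $1$-simplex $T_1$, the pair $(T_1(1,0),T_1(0,1))$ lies in $\WDt''$, so $\LDCp{s}$ is defined on it, and, using Definition~\ref{def:adt} together with the definitions of $[\DCp{s}]$, $[\LDCp{s}]$, and $(\exp_G)_*$,
\begin{equation*}
  [\DCp{s}](T_1) = \DCp{s}(T_1(1,0),T_1(0,1)) = \exp_G(\LDCp{s}(T_1(1,0),T_1(0,1))) = (\exp_G)_*[\LDCp{s}](T_1).
\end{equation*}
For a $\WDt''$-small $2$-simplex $T_2$, the triple $(T_2(1,0,0),T_2(0,1,0),T_2(0,0,1))$ lies in ${\WDt''}^{(3)}\subset \ti{\WDt}''$, so by Definition~\ref{def:bdt} the corresponding value of $\BDp{s}$ is $\exp_G(\LBDp{s}(\cdot))$, and an identical computation gives $[\BDp{s}](T_2) = (\exp_G)_*[\LBDp{s}](T_2)$.

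There is essentially no obstacle beyond the bookkeeping just described; the content of the statement is precisely that the evaluation maps defining $[\DCp{s}]$ and $[\DCp{s}]$ factor through $\exp_G$ in the sense captured by Definitions~\ref{def:adt} and~\ref{def:bdt}, and the role of the proof is to verify that the domains of the various singular complexes are compatible with this factorization. The only subtle point to state carefully is that moving from the $\VDt''$-small complex to the $\WDt''$-small one is needed precisely because $\LDCp{s}$ (and hence $[\LDCp{s}]$) is only defined where $\DCp{s}$ takes values in $V_e'$, which is exactly the open set $\WDt''$.
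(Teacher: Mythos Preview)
Your proof is correct and follows essentially the same approach as the paper: use $\WDt''\subset\VDt''$ to restrict the cochains, then verify the two identities on generating simplexes via Definitions~\ref{def:adt} and~\ref{def:bdt}. The paper's own argument is simply a terser version of what you wrote (and in fact obtains the inclusion $S_{2,\WDt''}(\PBbase)\subset S_{2,\VDt''}(\PBbase)$ directly from $\WDt''\subset\VDt''$ without passing through $\ti{\WDt}''$, though you do need ${\WDt''}^{(3)}\subset\ti{\WDt}''$ later to evaluate $\LBDp{s}$).
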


\begin{proof}
  That the $\VDt''$-small cochains are also $\WDt''$-small cochains
  follows from $\WDt''\subset \VDt''$
  (Lemma~\ref{le:adt_is_well_defined}) and, then, because
  $S_{n,\WDt''}(\PBbase)\subset S_{n,\VDt''}(\PBbase)$. To check the
  identities, it suffices to see that they are satisfied on $1$ and
  $2$ simplexes $T_1\in S_{1,\WDt''}(\PBbase)$ and
  $T_2\in S_{2,\WDt''}(\PBbase)$. But then, on evaluation both
  identities are satisfied because of Definitions~\ref{def:adt}
  and~\ref{def:bdt}.
\end{proof}

\begin{theorem}\label{th:phases_and_loops-algebra}
  Let $\PBmap:\PBtotal\rightarrow \PBbase$ be a principal $G$-bundle
  with $G$ abelian and $\DC:\UDt\rightarrow G$ be a discrete
  connection on $\PBmap$. Given an open subset $V\subset \PBbase$ and
  $s:V\rightarrow \PBtotal$ a smooth section of $\PBmap$ as well as
  $\conj{m}\in V$, for any $m_\cdot\in \Omega_N(\conj{m})$ such that
  $(m_0,m_k,m_{k+1}) \in {\WDt''}^{(3)}$ for all $k=0,\ldots,N-1$, we
  have that the discrete holonomy phase around $m_\cdot$ is
  \begin{equation}\label{eq:phases_and_loops-algebra-DC}
    \HPd(m_\cdot) = \exp_G\left(-\int_{\ti{m}}[\LDCp{s}]\right),
  \end{equation}
  where $\ti{m} \in S_{1,\WDt''}(\PBbase)$ interpolates $m_\cdot$, that is,
  satisfies the conditions of Lemma~\ref{le:paths_and_simplexes}. If,
  in addition, there is $\ti{\sigma}\in S_{2,\WDt''}(\PBbase)$ so that
  $\del_2^{\WDt''}(\ti{\sigma}) = \ti{m}$, then
  \begin{equation}\label{eq:phases_and_loops-algebra-BD}
    \HPd(m_\cdot) = \exp_G\left(-\int_{\ti{\sigma}}[\LBDp{s}]\right).
  \end{equation}
\end{theorem}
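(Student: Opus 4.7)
The plan is to reduce this to the $G$-valued formulas in Theorem~\ref{th:phases_and_loops-group} by means of Proposition~\ref{prop:small_cochains_and_exp_G} and the crucial fact, provided by Theorem~\ref{thm:exp_is_diffeo}, that when $G$ is abelian $\exp_G:\jgg\to G$ is a group homomorphism.

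First I would check the hypotheses of Theorem~\ref{th:phases_and_loops-group} are met. Since $\WDt''\subset \VDt''$ by Lemma~\ref{le:adt_is_well_defined}, the condition $(m_0,m_k,m_{k+1})\in {\WDt''}^{(3)}$ gives $(m_0,m_k,m_{k+1})\in {\VDt''}^{(3)}$; similarly any interpolating $\ti{m}\in S_{1,\WDt''}(\PBbase)$ automatically lies in $S_{1,\VDt''}(\PBbase)$ and, by Lemma~\ref{le:properties_of_small_cochains}, the hypothesis $\del_2^{\WDt''}\ti{\sigma}=\ti{m}$ is equivalent to $\del_2^{\VDt''}\ti{\sigma}=\ti{m}$. (Connectedness of $V$ is used in Theorem~\ref{th:phases_and_loops-group} only to produce the interpolation; here we are taking it as data, so this is not an issue.) So Theorem~\ref{th:phases_and_loops-group} applies and gives
\[\HPd(m_\cdot) = \left(\int_{\ti{m}}[\DCp{s}]\right)^{-1} \stext{ and, when applicable, } \HPd(m_\cdot) = \left(\int_{\ti{\sigma}}[\BDp{s}]\right)^{-1}.\]

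Next I would invoke Proposition~\ref{prop:small_cochains_and_exp_G}, which writes $[\DCp{s}]=(\exp_G)_*[\LDCp{s}]$ and $[\BDp{s}]=(\exp_G)_*[\LBDp{s}]$ as $\WDt''$-small cochains. The key computational step is then to show that for any chain $\ti{m}=\sum_j a_j T_1^j\in S_{1,\WDt''}(\PBbase)$ one has
\[\int_{\ti{m}}[\DCp{s}] = \prod_j \exp_G\!\bigl([\LDCp{s}](T_1^j)\bigr)^{a_j} = \exp_G\!\left(\sum_j a_j[\LDCp{s}](T_1^j)\right) = \exp_G\!\left(\int_{\ti{m}}[\LDCp{s}]\right),\]
where the middle equality uses that $\exp_G$ is a homomorphism (this is the step where abelianness of $G$ is essential). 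Inverting and using $\exp_G(x)^{-1}=\exp_G(-x)$ gives \eqref{eq:phases_and_loops-algebra-DC}. An identical argument applied to the $2$-chain $\ti{\sigma}$ and the pair $[\BDp{s}]=(\exp_G)_*[\LBDp{s}]$ yields \eqref{eq:phases_and_loops-algebra-BD}.

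There is no real obstacle here; the proof is essentially a translation through the commutative diagram $(\exp_G)_* \circ [\LDCp{s}] = [\DCp{s}]$. The only things requiring care are the bookkeeping around the containment $\WDt''\subset \VDt''$ (to justify using Theorem~\ref{th:phases_and_loops-group}) and the verification that the push-forward of a cochain along a group homomorphism commutes with the duality pairing on arbitrary chains, not just simplexes — which is exactly where the abelian hypothesis on $G$, ensuring $\exp_G$ is a homomorphism, is needed.
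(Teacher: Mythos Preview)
Your proposal is correct and follows essentially the same approach as the paper: reduce to Theorem~\ref{th:phases_and_loops-group} via the inclusion $\WDt''\subset\VDt''$, then push the $G$-valued integrals through Proposition~\ref{prop:small_cochains_and_exp_G} using that $\exp_G$ is a group homomorphism. The only minor difference is in deriving~\eqref{eq:phases_and_loops-algebra-BD}: you obtain it by applying the same $(\exp_G)_*$ argument to the second formula of Theorem~\ref{th:phases_and_loops-group}, whereas the paper instead starts from~\eqref{eq:phases_and_loops-algebra-DC} and applies the $\jgg$-valued Stokes identity $\int_{\ti{m}}[\LDCp{s}]=\int_{\ti{\sigma}}\delta_1^{\WDt''}[\LDCp{s}]=\int_{\ti{\sigma}}[\LBDp{s}]$; both routes are equally short and valid.
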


\begin{proof}
  Observe that
  $\ti{m}\in S_{1,\WDt''}(\PBbase) \subset S_{1,\VDt''}(\PBbase)$ so
  that, by Theorem~\ref{th:phases_and_loops-group}, we
  have~\eqref{eq:phases_and_loops-group-DC}. Then, using
  Proposition~\ref{prop:small_cochains_and_exp_G} and the fact that
  $\exp_G$ is a homomorphism,
  \begin{equation*}
    \begin{split}
      \HPd(m_\cdot) =& \left(\int_{\ti{m}}[\DCp{s}]\right)^{-1} =
      ([\DCp{s}](\ti{m}))^{-1} = (((\exp_G)_*[\LDCp{s}])(\ti{m}))^{-1}
      = \exp_G([\LDCp{s}](\ti{m}))^{-1} \\=& \exp_G(-[\LDCp{s}](\ti{m}))
      = \exp_G\left(-\int_{\ti{m}}[\LDCp{s}]\right),
    \end{split}
  \end{equation*}
  proving~\eqref{eq:phases_and_loops-algebra-DC}. Then, if
  $\del_2^{\WDt''}(\ti{\sigma}) =
  \ti{m}$,~\eqref{eq:phases_and_loops-algebra-BD} follows immediately
  from~\eqref{eq:phases_and_loops-algebra-DC} and
  \begin{equation*}
    \int_{\ti{m}} [\LDCp{s}] =
    \int_{\del_2^{\WDt''}(\ti{\sigma})} [\LDCp{s}] =
    \int_{\ti{\sigma}} \delta_1^{\WDt''} [\LDCp{s}] =
    \int_{\ti{\sigma}} [\LBDp{s}].
  \end{equation*}
\end{proof}

%%%%%%%%%%%%%%%%%%%%%%%%%%%%%%%%%%%%%%%%%%%%%%%%%%%%%%%%%%%%%%%%%%%

% \bibliography{math}

\begin{thebibliography}{MMR90}

\bibitem[AM78]{bo:AM-mechanics}
R. Abraham and J. E. Marsden, \emph{Foundations of mechanics},
  Benjamin/Cummings Publishing Co. Inc. Advanced Book Program, Reading, Mass.,
  1978, Second edition, revised and enlarged, With the assistance of Tudor
  Ra{\c{t}}iu and Richard Cushman. \MR{MR515141 (81e:58025)}

\bibitem[FJZ21]{ar:fernandez_juchani_zuccalli-discrete_connections_on_principal_bundles_the_discrete_atiyah_sequence}
J. Fern\'andez, M. Juchani, and M. Zuccalli, \emph{Discrete
  connections on principal bundles: the discrete {A}tiyah sequence}, In
  preparation, 2021.

\bibitem[FZ13]{ar:fernandez_zuccalli-a_geometric_approach_to_discrete_connections_on_principal_bundles}
J. Fern\'andez and M. Zuccalli, \emph{A geometric approach to
  discrete connections on principal bundles}, J. Geom. Mech. \textbf{5} (2013),
  no.~4, 433--444, Also, \href{http://arXiv.org/abs/1311.0260}{{\tt
  arXiv:1311.0260 [math.DG]}}. \MR{3180706}

\bibitem[FZ21]{ar:fernandez_zuccali-holonomy_of_discrete_connections}
\bysame, \emph{Discrete connections on
  principal bundles: holonomy and curvature}, In preparation, 2021.

\bibitem[KN96]{bo:kobayashi_nomizu-foundations-v1}
S. Kobayashi and K. Nomizu, \emph{Foundations of differential
  geometry. {V}ol. {I}}, Wiley Classics Library, John Wiley \& Sons Inc., New
  York, 1996, Reprint of the 1963 original, A Wiley-Interscience Publication.
  \MR{1393940 (97c:53001a)}

\bibitem[Leo04]{th:leok-thesis}
M. Leok, \emph{Foundations of computational geometric mechanics}, Ph.D.
  thesis, California Institute of Technology, 2004.

\bibitem[LMW05]{ar:leok_marsden_weinstein-a_discrete_theory_of_connections_on_principal_bundles}
M. Leok, J. E. Marsden, and A. Weinstein, \emph{A discrete theory of
  connections on principal bundles},
  \href{http://arXiv.org/abs/math/0508338}{\tt arXiv:math/0508338}, 2005.

\bibitem[MMR90]{ar:marsden_montgomery_ratiu-reduction_symmetry_and_phases_in_mechanics}
J. Marsden, R. Montgomery, and T. Ratiu, \emph{Reduction, symmetry, and phases
  in mechanics}, Mem. Amer. Math. Soc. \textbf{88} (1990), no.~436, iv+110.
  \MR{1030714 (91b:58074)}

\bibitem[Mun84]{bo:munkres-elements_of_algebraic_topology}
J. R. Munkres, \emph{Elements of algebraic topology}, Addison-Wesley
  Publishing Company, Menlo Park, CA, 1984. \MR{755006}

\bibitem[MW01]{ar:marsden_west-discrete_mechanics_and_variational_integrators}
J. E. Marsden and M. West, \emph{Discrete mechanics and variational
  integrators}, Acta Numer. \textbf{10} (2001), 357--514. \MR{MR2009697
  (2004h:37130)}

\bibitem[Sim83]{ar:simon-holonomy_the_quantum_adiabatic_theorem_and_berrys_phase}
B. Simon, \emph{Holonomy, the quantum adiabatic theorem, and {B}erry's
  phase}, Phys. Rev. Lett. \textbf{51} (1983), no.~24, 2167--2170. \MR{726866}

\bibitem[Var84]{bo:varadarajan-lie_groups_lie_algebras_and_their_representations}
V. S. Varadarajan, \emph{Lie groups, {L}ie algebras, and their
  representations}, Graduate Texts in Mathematics, vol. 102, Springer-Verlag,
  New York, 1984, Reprint of the 1974 edition. \MR{746308}

\end{thebibliography}

%%%%%%%%%%%%%%%%%%%%%%%%%%%%%%%%%%%%%%%%%%%%%%%%%%%%%%%%%%%%%%%%%%%

\def\cprime{$'$} \def\polhk#1{\setbox0=\hbox{#1}{\ooalign{\hidewidth
  \lower1.5ex\hbox{`}\hidewidth\crcr\unhbox0}}} \def\cprime{$'$}
  \def\cprime{$'$}
\providecommand{\bysame}{\leavevmode\hbox to3em{\hrulefill}\thinspace}
\providecommand{\MR}{\relax\ifhmode\unskip\space\fi MR }
% \MRhref is called by the amsart/book/proc definition of \MR.
\providecommand{\MRhref}[2]{%
  \href{http://www.ams.org/mathscinet-getitem?mr=#1}{#2}
}
\providecommand{\href}[2]{#2}

%%%%%%%%%%%%%%%%%%%%%%%%%%%%%%%%%%%%%%%%%%%%%%%%%%%%%%%%%%%%%%%%%%%

\end{document}